\numberwithin{equation}{section}
\newcommand{\BDC}{{\mathbf{D}}^{\mathrm{b}}}
\newcommand{\Mod}{\mathrm{Mod}}
\newcommand{\CC}{\mathbb{C}}
\newcommand{\RR}{\mathbb{R}}
\newcommand{\ZZ}{\mathbb{Z}}
\newcommand{\D}{\mathcal{D}}
\newcommand{\F}{\mathcal{F}}
\newcommand{\G}{\mathcal{G}}
\newcommand{\M}{\mathcal{M}}
\newcommand{\sho}{\mathcal{O}}
\newcommand{\PP}{{\mathbb P}}
\newcommand{\an}{{\rm an}}
\newcommand{\e}{\varepsilon}
\newcommand{\Perv}{{\rm Perv}}
\newcommand{\Sol}{{\rm Sol}}
\newcommand{\tl}[1]{\widetilde{#1}}
\newcommand{\simto}{\overset{\sim}{\longrightarrow}}
\newcommand{\op}{\mbox{\scriptsize op}}
\newcommand{\SD}{\mathcal{D}}
\newcommand{\SO}{\mathcal{O}}
\newcommand{\SM}{\mathcal{M}}
\newcommand{\SN}{\mathcal{N}}
\newcommand{\SE}{\mathcal{E}}
\newcommand{\SF}{\mathcal{F}}
\newcommand{\SG}{\mathcal{G}}
\newcommand{\Modcoh}{\mathrm{Mod}_{\mbox{\rm \scriptsize coh}}}
\newcommand{\Modhol}{\mathrm{Mod}_{\mbox{\rm \scriptsize hol}}}
\newcommand{\Modrh}{\mathrm{Mod}_{\mbox{\rm \scriptsize rh}}}
\newcommand{\BDCcoh}{{\mathbf{D}}^{\mathrm{b}}_{\mbox{\rm \scriptsize coh}}}
\newcommand{\BDChol}{{\mathbf{D}}^{\mathrm{b}}_{\mbox{\rm \scriptsize hol}}}
\newcommand{\BDCrh}{{\mathbf{D}}^{\mathrm{b}}_{\mbox{\rm \scriptsize rh}}}
\newcommand{\DD}{\mathbb{D}}
\newcommand{\Dotimes}{\overset{D}{\otimes}}
\newcommand{\Potimes}{\overset{+}{\otimes}}
\newcommand{\rhom}{{\rm R}{\mathcal{H}}om}
\newcommand{\rihom}{{\rm R}{\mathcal{I}}hom}
\newcommand{\Prihom}{{\rm R}{\mathcal{I}}hom^+}
\newcommand{\Prhom}{\rhom^+}
\newcommand{\I}{{\rm I}}
\newcommand{\che}[1]{\overset{\vee}{#1}}
\newcommand{\var}[1]{\overline{#1}}
\newcommand{\BEC}{{\mathbf{E}}^{\mathrm{b}}}
\newcommand{\Q}{\mathbf{Q}}
\newcommand{\EE}{\mathbb{E}}
\newcommand{\T}{{\rm T}}
\newcommand{\bfR}{\mathbf{R}}
\newcommand{\bfL}{\mathbf{L}}
\newcommand{\bfD}{\mathbf{D}}
\newcommand{\rmR}{{\rm R}}
\newcommand{\rmE}{{\rm E}}
\newcommand{\rmt}{{\rm t}}
\newcommand{\bfE}{\mathbf{E}}
\newcommand{\rmL}{{\rm L}}
\renewcommand{\Re}{\rm Re}
\newtheorem{theorem}{Theorem}[section]
\newtheorem{corollary}[theorem]{Corollary}
\newtheorem{lemma}[theorem]{Lemma}
\newtheorem{proposition}[theorem]{Proposition}
\theoremstyle{definition}
\theoremstyle{remark}
\title{On Some Topological Properties of 
Fourier Transforms of Regular Holonomic $\SD$-Modules 
\footnote{{\bf 2010 Mathematics 
Subject Classification: }32C38, 32S60, 35A27}}
\author{ Yohei ITO 
\footnote{Graduate School of Mathematical Science, The University of 
Tokyo, 3-8-1, Komaba, 
Meguro, Tokyo, 153-8914, Japan. 
E-mail: yitoh@ms.u-tokyo.ac.jp } 
and Kiyoshi TAKEUCHI 
\footnote{Institute of Mathematics, University  of 
Tsukuba, 1-1-1, Tennodai, 
Tsukuba, Ibaraki, 305-8571, Japan. 
E-mail: takemicro@nifty.com} }
\date{}
\begin{document}
\maketitle

\begin{abstract}
We study Fourier transforms of regular holonomic 
$\SD$-modules. In particular we show that 
their solution complexes are monodromic. 
An application to direct images of 
some irregular holonomic $\SD$-modules 
will be given. Moreover we give a new proof to 
the classical theorem of Brylinski and 
improve it by showing its converse. 
\end{abstract}

\section{Introduction}\label{sec:1}
First of all 
we recall Fourier transforms of algebraic $\SD$-modules. 
Let $X=\CC_z^N$ be a complex vector space 
and $Y=\CC_w^N$ its dual. 
We regard them as algebraic varieties and 
use the notations $\SD_X$ and $\SD_Y$ for 
the rings of ``algebraic'' differential operators on them. 
Denote by $\Modcoh(\SD_X)$ (resp. $\Modhol(\SD_X)$) 
the category of coherent (resp. holonomic) $\SD_X$-modules. 
Let $W_N := \CC[z, \partial_z]\simeq\Gamma(X; \SD_X)$ and 
$W^\ast_N := \CC[w, \partial_w]\simeq\Gamma(Y; \SD_Y)$ 
be the Weyl algebras over $X$ and $Y$, respectively. 
Then by the ring isomorphism 
\begin{equation*}
W_N\simto W^\ast_N\hspace{30pt}
(z_i\mapsto-\partial_{w_i},\ \partial_{z_i}\mapsto w_i) 
\end{equation*}
we can endow a left $W_N$-module $M$ with 
a structure of a left $W_N^\ast$-module.
We call it the Fourier transform of $M$ 
and denote it by $M^\wedge$. 
For a ring $R$ we denote by $\Mod_f(R)$ 
the category of finitely generated $R$-modules. 
Recall that for the affine algebraic varieties $X$ 
and $Y$ we have the equivalences of categories 
\begin{align*} 
\Modcoh(\SD_X)
&\simeq
\Mod_f(\Gamma(X; \SD_X)) = \Mod_f(W_N),\\
\Modcoh(\SD_Y)
&\simeq
\Mod_f(\Gamma(Y; \SD_Y)) = \Mod_f(W^\ast_N)
\end{align*}
(see e.g. \cite[Propositions 1.4.4 and 1.4.13]{HTT08}).
For a coherent $\SD_X$-module $\SM\in\Modcoh(\SD_X)$
we thus can define its Fourier 
transform $\SM^\wedge\in\Modcoh(\SD_Y)$. 
It follows that we obtain an equivalence of categories
\begin{equation*}
( \cdot )^\wedge : \Modhol(\SD_X)\simto \Modhol(\SD_Y)
\end{equation*}
between the categories of holonomic $\SD$-modules.
However the Fourier transform $\SM^\wedge$ of a regular 
holonomic $\SD_X$-module $\SM$ is 
not necessarily regular. For the regularity of 
$\SM^\wedge$ we need some strong condition on $\SM$. 
Recall that a constructible sheaf 
$\SF \in\BDC_{\CC-c}(\CC_{X})$ on $X= \CC^N$ 
is called monodromic if 
its cohomology sheaves are locally constant 
on each $\CC^*$-orbit in $X= \CC^N$. 
Note that this condition was introduced by 
Verdier in \cite{Ver83}. 
Then the following beautiful theorem 
is due to Brylinski \cite{Bry86}. 

\begin{theorem}[Brylinski \cite{Bry86}]\label{th-Bry}
Let $\SM$ be an algebraic regular holonomic 
$\SD$-module on $X= \CC^N$. Assume that its 
solution complex $Sol_{X}(\SM)$ is 
monodromic. Then its 
Fourier transform $\SM^\wedge$ is regular 
and $Sol_{Y}(\SM^\wedge)$ is monodromic. 
\end{theorem}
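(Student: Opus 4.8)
The plan is to reduce the statement to the compatibility between the $\SD$-module Fourier transform and the topological Fourier--Sato transform on monodromic objects, and then to upgrade the resulting information on solution complexes to a regularity statement by means of the Riemann--Hilbert correspondence. First I would record the microlocal reformulation of monodromicity: a regular holonomic $\SD_X$-module $\SM$ has monodromic solution complex $Sol_X(\SM)$ if and only if its characteristic variety $\mathrm{Ch}(\SM)$ is contained in the ``Euler-isotropic'' subset $\{(z,\zeta)\in T^*X : \langle z,\zeta\rangle = 0\}$; since $\mathrm{Ch}(\SM)=\ms{Sol_X(\SM)}$, this is exactly the condition that $\ms{Sol_X(\SM)}$ be $\CC^*$-conic. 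This is the bridge that lets me pass freely between the $\SD$-module side and the constructible side.

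Next I would analyze the effect of $(\cdot)^\wedge$ on characteristic varieties. The defining ring isomorphism $z_i\mapsto -\partial_{w_i}$, $\partial_{z_i}\mapsto w_i$ preserves the Bernstein filtration and induces on symbols the symplectic identification $T^*X\simto T^*Y$, $(z,\zeta)\mapsto (w,\omega) = (\zeta,-z)$, under which $\mathrm{Ch}(\SM^\wedge)$ is the image of $\mathrm{Ch}(\SM)$ (the two characteristic varieties agree here because $\mathrm{Ch}(\SM)$ is bihomogeneous in the monodromic case). Since $\langle w,\omega\rangle = \langle\zeta,-z\rangle = -\langle z,\zeta\rangle$, the Euler-isotropic condition is preserved, so $\mathrm{Ch}(\SM^\wedge)\subset\{\langle w,\omega\rangle = 0\}$ and $\SM^\wedge$ is holonomic with ``monodromic-type'' characteristic variety. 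In particular, once we know $\SM^\wedge$ is regular, the first microlocal step will immediately yield that $Sol_Y(\SM^\wedge)$ is monodromic.

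The heart of the argument is therefore the identification, up to an appropriate shift,
\[
Sol_Y(\SM^\wedge)\ \cong\ \bigl(Sol_X(\SM)\bigr)^\wedge,
\]
where the right-hand side denotes the Fourier--Sato transform of the conic complex $Sol_X(\SM)$. I would deduce this from the kernel calculus for the solution functor: the $\SD$-module Fourier transform is the integral transform along $X\times Y$ with the exponential kernel $\SO_{X\times Y}e^{\langle z,w\rangle}$, and applying $Sol$ turns it into the sheaf-theoretic integral transform with the corresponding exponential kernel, which on $\CC^*$-conic (monodromic) input coincides with the Fourier--Sato transform. Granting this comparison, the Fourier--Sato transform preserves $\CC$-constructibility and monodromicity, so $Sol_Y(\SM^\wedge)$ is a monodromic $\CC$-constructible complex; applying the quasi-inverse of the Riemann--Hilbert correspondence to $\bigl(Sol_X(\SM)\bigr)^\wedge$ produces a regular holonomic module $\SN$ on $Y$ with $Sol_Y(\SN)\cong\bigl(Sol_X(\SM)\bigr)^\wedge$, and the comparison isomorphism then forces $\SM^\wedge\cong\SN$, whence $\SM^\wedge$ is regular. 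The point worth stressing is that the mere constructibility of $Sol_Y(\SM^\wedge)$ does not by itself yield regularity --- irregular holonomic modules also have constructible solution complexes --- so it is essential to realize $\SM^\wedge$ as the Riemann--Hilbert image of a genuine constructible complex, not merely to observe that its solutions are constructible.

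The main obstacle is precisely the kernel comparison in the displayed isomorphism: the exponential kernel $e^{\langle z,w\rangle}$ is irregular, and a priori its integral transform is controlled only by the theory of irregular (enhanced) solutions. The crux is to show that, when the input is monodromic, the irregular contributions of the exponential cancel and the transform reduces to the tame Fourier--Sato transform. I would handle this by exploiting the $\CC^*$-homogeneity of the data: either microlocally, estimating $\ms{}$ of the exponential kernel against the conic singular support of $Sol_X(\SM)$ so that the relevant growth contributions degenerate, or more algebraically by decomposing $\SM$ into generalized eigenspaces of the Euler operator $\theta_z=\sum z_i\partial_{z_i}$ (which acts locally finitely on a monodromic module), using that $(\cdot)^\wedge$ sends $\theta_z$ to $-\theta_w-N$ and thereby reduces the exponential integral to a Mellin-type transform in the radial variable that manifestly preserves regular-singularity structure. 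Either route localizes the difficulty to the behaviour at the origin and at infinity along $\CC^*$-orbits, where conicity makes the exponential kernel effectively tame.
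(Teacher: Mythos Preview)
Your skeleton matches the paper's: identify $Sol_Y(\SM^\wedge)$ with the Fourier--Sato transform $(Sol_X(\SM))^\wedge$, produce via Riemann--Hilbert a regular $\SN$ with the same solutions, and conclude $\SM^\wedge\simeq\SN$. The genuine gap is the last step. You correctly observe that an isomorphism of classical solution complexes does not by itself give regularity, since $Sol_Y$ is fully faithful only on the regular subcategory; but your sentence ``the comparison isomorphism then forces $\SM^\wedge\cong\SN$'' asserts exactly this unjustified implication. Of the two fixes you sketch in the final paragraph, the microlocal one (singular-support estimates against the exponential kernel) is again aimed at the \emph{classical} solution isomorphism and does not upgrade it to an isomorphism of $\SD$-modules. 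Your Euler-operator/Mellin route is more promising---it is essentially Brylinski's original reduction to one variable---but as written it is a separate proof strategy bolted onto the end, not a completion of the Riemann--Hilbert argument you set up, and it would need the nearby-cycle input that the paper is explicitly trying to avoid.

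The paper closes the gap by a device you mention in passing (``the theory of irregular (enhanced) solutions'') but never deploy: it works with the \emph{enhanced} solution functor $Sol^\rmE$, which by the D'Agnolo--Kashiwara irregular Riemann--Hilbert correspondence is fully faithful on all of $\BDChol$. The comparison with Fourier--Sato is carried out at this level: for monodromic $\SM$, via the Kashiwara--Schapira identification of the enhanced Fourier--Laplace and the classical Fourier--Sato transforms on conic objects, one gets
\[
Sol_{\var{Y}}^\rmE(\tl{\SM^\wedge})\ \simeq\ \CC_{\var{Y}^{\an}}^\rmE\Potimes\e\bigl(i_{Y!}(Sol_X(\SM))^\wedge\bigr)\ \simeq\ Sol_{\var{Y}}^\rmE(\SN)
\]
for a regular holonomic $\SN$ on $\var{Y}$. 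Full faithfulness of $Sol^\rmE$ then yields $(\tl{\SM^\wedge})^{\an}\simeq\SN^{\an}$, and one descends algebraically via the analytification equivalence for regular holonomic modules on projective varieties. The idea you are missing is that the ``cancellation of the irregular contributions of the exponential on monodromic input'' must be established in the enhanced category, where it becomes the concrete statement that the enhanced Fourier--Laplace transform of $\e(i_{X!}Sol_X(\SM))$ lands back in the image of $\e$; once there, full faithfulness replaces any direct regularity analysis.
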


Recently in \cite{IT18} the authors studied the 
Fourier transforms of general regular 
holonomic $\SD$-modules very precisely by 
using the Riemann-Hilbert correspondence for 
irregular holonomic $\SD$-modules established by 
D'Agnolo and Kashiwara \cite{DK16} and 
the Fourier-Sato transforms 
for enhanced ind-sheaves developed by 
Kashiwara and Schapira \cite{KS16-2}. 
In this process we found a new proof 
of Theorem \ref{th-Bry} (see the proof of 
Theorem \ref{blythm}). Recall that Brylinski 
proved it by reducing the problem to 
the case $N=1$ and using some deep results 
on nearby cycle $\SD$-modules. Our new 
proof is purely geometric and relies on 
the Riemann-Hilbert correspondence of 
D'Agnolo and Kashiwara \cite{DK16}. 
See the proof of Theorem \ref{blythm} for the details. 
In our study of Fourier transforms of regular 
holonomic $\SD$-modules, we found also that for a 
regular holonomic $\SD_X$-module $\SM$ 
the enhanced solution complex 
$Sol_Y^\rmE ( \SM^\wedge )$ of its 
Fourier transform $\SM^\wedge$ satisfies 
a special condition. More precisely, 
for a $\RR_+$-conic sheaf 
$\SG \in \BDC(\CC_{Y \times \RR})$ on 
$Y \times \RR \simeq \RR^{2N+1}$ we found 
an isomorphism 
\[ Sol_Y^\rmE ( \SM^\wedge ) \simeq 
\CC_Y^\rmE \Potimes \SG, \]
where we regard $\SG$ as an ind-sheaf on 
the bordered space $Y \times \RR_\infty$ 
by the natural embedding 
\[\xymatrix@C=30pt@M=10pt{\BDC( 
\CC_{Y \times \RR})\ar@{^{(}->}[r] & 
\BDC(\I\CC_{Y \times \RR_{\infty}}).}\]
See Corollary \ref{cor1}. 
From this we obtain the following result. 

\begin{theorem}\label{newthm2}
Let $\SM$ be an algebraic regular holonomic 
$\SD$-module on $X= \CC^N$. 
Then $Sol_Y(\M^\wedge)$ is monodromic.
\end{theorem}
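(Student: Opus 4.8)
The plan is to derive Theorem \ref{newthm2} from Corollary \ref{cor1}, which asserts that for a regular holonomic $\SD_X$-module $\SM$ the enhanced solution complex of its Fourier transform $\SM^\wedge$ factors as $Sol_Y^\rmE(\SM^\wedge) \simeq \CC_Y^\rmE \Potimes \SG$ for some $\RR_+$-conic sheaf $\SG \in \BDC(\CC_{Y \times \RR})$. The key point is that the ordinary solution complex $Sol_Y(\SM^\wedge)$ can be recovered from the enhanced one by applying the functor that forgets the extra variable, concretely by taking a suitable cohomology along the $\RR$-direction (the functor $\sho\mathbf{H}om$ against $\CC_Y^\rmE$, or equivalently the recipe sending an enhanced ind-sheaf back to an ordinary complex that realizes the object as the ``degree zero'' part of its enhanced lift). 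So first I would make precise the statement that $Sol_Y(\SM^\wedge)$ is obtained from $Sol_Y^\rmE(\SM^\wedge)$ by this forgetting functor, using the compatibility of the enhanced and ordinary Riemann–Hilbert correspondences established by D'Agnolo–Kashiwara \cite{DK16}.

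Next I would translate the $\RR_+$-conicity of $\SG$ on $Y \times \RR$ into the monodromicity of $Sol_Y(\SM^\wedge)$ on $Y \simeq \CC^N$. The mechanism is that the Fourier transform on the $\SD$-module side corresponds, under the Riemann–Hilbert correspondence for enhanced ind-sheaves, to the Fourier–Sato transform of Kashiwara–Schapira \cite{KS16-2}, and the output of a Fourier–Sato transform is naturally conic with respect to the $\RR_+$- (indeed $\CC^*$-) action. Thus I would show that applying the forgetting functor to $\CC_Y^\rmE \Potimes \SG$, with $\SG$ being $\RR_+$-conic, produces a constructible complex on $Y$ whose cohomology sheaves are locally constant along the $\CC^*$-orbits. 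Concretely, the $\RR_+$-conic structure of $\SG$ in the $(Y,\RR)$-variables is precisely what encodes invariance under the scaling $\CC^*$-action on $Y$ after passing to the ordinary solution complex.

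The main obstacle I anticipate is bookkeeping the passage between the three layers — enhanced ind-sheaves on the bordered space $Y \times \RR_\infty$, the auxiliary $\RR_+$-conic sheaf $\SG$ on $Y \times \RR$, and ordinary constructible sheaves on $Y$ — in a way that faithfully carries the conicity datum all the way down to monodromicity. In particular one must check that the forgetting functor sending $Sol_Y^\rmE(\SM^\wedge)$ to $Sol_Y(\SM^\wedge)$ does not destroy the $\CC^*$-equivariance implicit in the factorization through $\CC_Y^\rmE$, and that the $\RR_+$-conic hypothesis on $\SG$, which a priori only governs the extra $\RR$-variable, indeed restricts to invariance along the $\CC^*$-orbits in $Y$ itself. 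Once this compatibility is set up correctly, the conclusion is immediate: the local constancy of the cohomology sheaves of $Sol_Y(\SM^\wedge)$ on each $\CC^*$-orbit follows directly, which is exactly the definition of monodromicity recalled in the introduction.

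Finally, I would remark that this gives a conceptually clean route to Theorem \ref{newthm2}: rather than analyzing the characteristic variety of $\SM^\wedge$ directly, one reads off the monodromic structure from the geometry of the Fourier–Sato transform via the isomorphism of Corollary \ref{cor1}. This also explains why the result holds for \emph{all} regular holonomic $\SM$, with no additional hypothesis such as the monodromicity of $Sol_X(\SM)$ required in Brylinski's Theorem \ref{th-Bry}.
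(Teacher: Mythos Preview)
Your overall strategy---pass from Corollary~\ref{cor1} to the ordinary solution complex via the forgetting functor, and then read off the desired invariance---is exactly the route the paper takes (through Propositions~\ref{prop3} and~\ref{prop2}). But there is a genuine gap in how you handle the final step, and a related misconception about the $\RR_+$-action.

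First, the $\RR_+$-conicity of $\SG$ on $Y\times\RR$ is \emph{not} ``a priori only governing the extra $\RR$-variable'': it is conicity for the diagonal action $\ell_s(w,t)=(sw,st)$ on $Y\times\RR\simeq\RR^{2N+1}$ (this is how Lemma~\ref{prop1} is proved). This is precisely what makes the invariance descend to $Y$: pushing forward along $\pi$ and using that $\CC_{\{t\geq0\}}\oplus\CC_{\{t\leq0\}}$ is itself $\ell_s$-invariant, one obtains $m_s^{-1}Sol_Y(\SM^\wedge)\simeq Sol_Y(\SM^\wedge)$ for every $s\in\RR_+$ (this is Proposition~\ref{prop2}). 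So what you get on $Y$ is $\RR_+$-conicity, not $\CC^*$-equivariance.

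Second, and this is the real gap: you repeatedly conflate $\RR_+$-conicity with monodromicity (local constancy on $\CC^*$-orbits). These are not the same, and the passage from one to the other is not automatic. An $\RR_+$-conic sheaf on $\CC^N$ need not have cohomology locally constant on $\CC^*$-orbits without some further input. The paper supplies that input via Lemma~\ref{lem1}: a $\CC$-constructible sheaf on $\CC^N$ that is $\RR_+$-conic is monodromic. The proof of that lemma is a genuine argument using vanishing cycles and perverse cohomology, not a tautology. Since $\SM^\wedge$ is holonomic, $Sol_Y(\SM^\wedge)$ is $\CC$-constructible, and combining this with the $\RR_+$-conicity from Proposition~\ref{prop2} one invokes Lemma~\ref{lem1} to conclude. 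Your proposal skips this step entirely; you should make it explicit.
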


It seems that this result is already implicit 
in the main theorem of Daia \cite{Dai00}. 
Indeed, for regular holonomic $\M\in\Modrh(\D_X)$ it implies that 
$Sol_Y(\M^\wedge)$ is $\RR_+$-conic. Note that 
the recent result \cite[Lemma 6.1.3]{DHMS17} of 
D'Agnolo, Hien, Morando and Sabbah 
implies also the same property of $Sol_Y(\M^\wedge)$ 
(see also \cite[Lemma 1.5.2]{DHMS17}). 
For a general theory of conic ind-sheaves see \cite{Pre11}.
The monodromicity of $Sol_Y(\M^\wedge)$ in Theorem \ref{newthm2} 
follows from its $\CC$-constructibility and 
the $\RR_+$-conicness (see Lemma \ref{lem1}). 
In this paper we prove Theorem \ref{newthm2} by using 
the theory of enhanced ind-sheaves and 
our results in \cite{IT18}. 
In this way, we can also improve Brylinski's 
Theorem \ref{th-Bry} as follows. 

\begin{corollary}
Let $\SM$ be an algebraic regular holonomic 
$\SD$-module on $X= \CC^N$. 
Then $\M^\wedge$ is regular if and only if $\M$ is monodromic.
\end{corollary}

Namely we prove the converse of Brylinski's 
theorem. Moreover, as a simple application of 
Theorem \ref{newthm2} we obtain the following 
result which may be of independent interest. 

\begin{theorem}\label{ntheorem-3}
Let $\rho : X=\CC^N\twoheadrightarrow Z=\CC^{N-1}$
be a surjective linear map of codimension one and 
$\SM$ an algebraic regular holonomic 
$\SD$-module on $X= \CC^N$. 
For the dual $L\simeq\CC^{N-1}$ of $Z$ let 
$\iota : L\xhookrightarrow{\ \ \ } Y=\CC^N$ 
be the injective linear map induced by $\rho$. 
Then for any point $a\in Y\setminus\iota(L)$ the direct image 
$\bfD\rho_\ast(\M\Dotimes\sho_Xe^{- \langle z, a\rangle})
\in\BDChol(\D_Z)$ is concentrated in degree $0$.
\end{theorem}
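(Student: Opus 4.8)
The plan is to pass to Fourier transforms, where the pushforward along $\rho$ becomes a restriction of $\M^\wedge$ to an affine hyperplane, and then to exploit the monodromicity of $Sol_Y(\M^\wedge)$ from Theorem \ref{newthm2} to see that this hyperplane is non-characteristic. First I would record two compatibilities of the Weyl-algebra Fourier transform for the linear maps at hand. Writing $\Ker\rho=\CC v_0$ with $v_0\neq0$, the dual inclusion $\iota={}^t\rho:L\hookrightarrow Y$ has image the hyperplane $\iota(L)=\{w\in Y:\langle w,v_0\rangle=0\}$. (i) The exponential twist transforms to a translation: since $\partial_{z_i}$ acts on $\M\Dotimes\sho_Xe^{-\langle z,a\rangle}$ as $\partial_{z_i}-a_i$ and $\partial_{z_i}\mapsto w_i$ under $(\cdot)^\wedge$, one gets $(\M\Dotimes\sho_Xe^{-\langle z,a\rangle})^\wedge\simeq T_a^*\M^\wedge$, the pullback of $\M^\wedge$ by the translation $T_a:w\mapsto w+a$. (ii) For linear maps, Fourier transform intertwines the pushforward $\bfD\rho_*$ with the inverse image $\iota^\dagger$ along the transpose, i.e. $(\bfD\rho_*\N)^\wedge\simeq\iota^\dagger(\N^\wedge)$ up to a fixed shift. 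Combining (i) and (ii),
\[(\bfD\rho_*(\M\Dotimes\sho_Xe^{-\langle z,a\rangle}))^\wedge\simeq\iota^\dagger(T_a^*\M^\wedge)=(T_a\circ\iota)^\dagger\M^\wedge,\]
which is the $\SD$-module restriction of $\M^\wedge$ to the affine hyperplane $V_a:=a+\iota(L)=\{w:\langle w,v_0\rangle=\langle a,v_0\rangle\}$. The hypothesis $a\notin\iota(L)$ is exactly $c:=\langle a,v_0\rangle\neq0$, so $V_a$ avoids the origin. Since $(\cdot)^\wedge:\Modhol(\SD_X)\simto\Modhol(\SD_Y)$ is exact and extends to a $t$-exact equivalence on $\BDChol$, it suffices to prove that $(T_a\circ\iota)^\dagger\M^\wedge$ is concentrated in a single degree.

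Next I would check that $V_a$ is non-characteristic for $\M^\wedge$. By Theorem \ref{newthm2} the complex $Sol_Y(\M^\wedge)$ is monodromic, so its micro-support $\ms{Sol_Y(\M^\wedge)}$ is contained in the incidence variety $\{(w;\xi)\in T^*Y:\langle w,\xi\rangle=0\}$: a monodromic, hence $\RR_+$-conic, sheaf has micro-support in the zero set of the Euler moment map, the invariant conic Lagrangians being the conormals to the $\CC^*$-orbits, all lying in $\{\langle w,\xi\rangle=0\}$. As $\M^\wedge$ is holonomic, $\mathrm{Char}(\M^\wedge)=\ms{Sol_Y(\M^\wedge)}$ \cite{HTT08}, so $\mathrm{Char}(\M^\wedge)\subseteq\{\langle w,\xi\rangle=0\}$; note that no regularity of $\M^\wedge$ is used here. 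The conormal to $V_a$ is $T^*_{V_a}Y=\{(w;tv_0):\langle w,v_0\rangle=c,\ t\in\CC\}$. If $(w;\xi)\in\mathrm{Char}(\M^\wedge)\cap T^*_{V_a}Y$, then $\xi=tv_0$ and $0=\langle w,\xi\rangle=t\langle w,v_0\rangle=tc$, whence $t=0$ as $c\neq0$. Thus $\mathrm{Char}(\M^\wedge)\cap T^*_{V_a}Y$ lies in the zero section, i.e. $V_a$ is non-characteristic.

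Finally, Kashiwara's non-characteristic inverse image theorem, which is valid for any coherent $\SD$-module and needs no regularity \cite{HTT08}, gives that $(T_a\circ\iota)^\dagger\M^\wedge$ is a single holonomic $\SD_L$-module placed in one cohomological degree; tracking the shift in (ii) against the normalization of $\bfD\rho_*$ shows that this degree corresponds to degree $0$ downstairs, so $\bfD\rho_*(\M\Dotimes\sho_Xe^{-\langle z,a\rangle})$ is concentrated in degree $0$. I expect the main obstacle to be the clean formulation of compatibility (ii) with its precise cohomological shift, and the matching of that shift with the convention for $\bfD\rho_*$ so that the single surviving degree is exactly $0$; the geometric input---that monodromicity forces $\mathrm{Char}(\M^\wedge)$ into $\{\langle w,\xi\rangle=0\}$ and hence makes the off-origin hyperplane $V_a$ non-characteristic---is the conceptual heart but is short once the conicness is granted. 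An alternative, closer to the methods of \cite{IT18}, would run the same transversality argument on the enhanced solution side via the D'Agnolo--Kashiwara correspondence, so as to avoid direct manipulation of $\SD$-module shifts.
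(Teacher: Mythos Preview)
Your proposal is correct and follows essentially the same route as the paper: it proves the statement as a corollary of a more general non-characteristic restriction result (the paper's Theorem~\ref{thm3}), using the two compatibilities you label (i) and (ii)---which are the paper's Lemma~\ref{lem2} and \cite[Proposition~3.2.6]{HTT08}---and then invokes Theorem~\ref{newthm2} to verify the non-characteristic hypothesis for the off-origin hyperplane. Your explicit micro-support computation showing $\mathrm{Char}(\M^\wedge)\subseteq\{\langle w,\xi\rangle=0\}$ is exactly the content the paper compresses into the sentence ``since $K$ does not contain the origin, this implies that $K$ is non-characteristic for $\M^\wedge$''; your worry about the shift in (ii) is handled in the paper's conventions by citing \cite[Proposition~3.2.6]{HTT08} directly, so no residual shift appears.
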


Recently many mathematicians studied 
direct images of irregular holonomic 
$\SD$-modules and obtained precise 
results. For example, see Heizinger \cite{Hei15}, 
Hien-Roucairol \cite{HR08} and 
Roucairol \cite{Rou06}, \cite{Rou07} etc. 
Note also that in the case $N=1$ 
Fourier transforms of general 
holonomic $\SD$-modules were precisely 
studied by many authors such as 
Bloch-Esnault \cite{BE04}, 
D'Agnolo-Kashiwara \cite{DK17}, 
Mochizuki \cite{Mochi10, Mochi18} and 
Sabbah \cite{Sab08} etc.

\section{Preliminary Notions and Results}\label{uni-sec:2}
In this section, we briefly recall some basic notions 
and results which will be used in this paper. 
We assume here that the reader is familiar with the 
theory of sheaves and functors in the framework of 
derived categories. For them we follow the terminologies 
in \cite{KS90} etc. For a topological space $M$ 
denote by $\BDC(\CC_M)$ the derived category 
consisting of bounded 
complexes of sheaves of $\CC$-vector spaces on it. 
The following lemma will be used in 
the proofs of Theorems \ref{blythm} and \ref{thm2} . 

\begin{lemma}\label{lem1}
Assume that a $\CC$-constructible sheaf 
$\G\in\BDC_{\CC-c}(\CC_{\CC^N})$
on $\CC^N$ is $\RR_+$-conic.
Then it is monodromic.
\end{lemma}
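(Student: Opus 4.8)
The plan is to argue microlocally, encoding both the hypothesis and the conclusion through the singular support $\ms{\G}$. First I would reduce to the case where $\G$ is a single $\CC$-constructible sheaf $\F$ placed in degree $0$: monodromicity is a condition on the cohomology sheaves $\CH^j(\G)$, and each $\CH^j(\G)$ is again $\CC$-constructible and $\RR_+$-conic, so it suffices to treat one of them. Away from the origin the $\CC^\ast$-orbits are exactly the fibres of the projection $\pi:\CC^N\setminus\{0\}\to\PP^{N-1}$, while the orbit $\{0\}$ is a single point on which local constancy is automatic; hence monodromicity of $\F$ is equivalent to $\F|_{\CC^N\setminus\{0\}}$ being locally constant along the fibres of $\pi$. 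By the microlocal criterion for local constancy along the leaves of a foliation (the propagation results of \cite{KS90}), this is in turn equivalent to the inclusion $\ms{\F}\subseteq N^\ast(\text{fibres of }\pi)$, i.e.\ to every covector of $\ms{\F}$ over a point $z\neq0$ annihilating the tangent space of the $\CC^\ast$-orbit through $z$.

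Next I would make this inclusion concrete. At $z\neq0$ the tangent space of the orbit is spanned by the two real vector fields generating the $\RR_+$-scaling and the $U(1)$-rotation, namely the radial Euler field $E$ and the angular field $J$ obtained from $E$ via the complex structure. Writing cotangent coordinates $(z;\zeta)$ on $T^\ast\CC^N$ and setting $h(z,\zeta)=\sum_{j}z_j\zeta_j$ for the holomorphic symbol of the complex Euler field $\sum_j z_j\partial_{z_j}$, a direct computation identifies, under the standard real/complex identification of cotangent bundles, the symbol of $E$ with $\Re(h)$ and that of $J$ with $-\mathrm{Im}(h)$. Thus the hypothesis that $\F$ is $\RR_+$-conic translates (via the easy direction of the same criterion) into $\Re(h)|_{\ms{\F}}=0$, while the desired conclusion amounts to the single additional equation $\mathrm{Im}(h)|_{\ms{\F}}=0$; equivalently, it suffices to prove that $h\equiv0$ on $\ms{\F}$.

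The heart of the argument, and the point at which $\CC$-constructibility is indispensable, is this last vanishing. Here I would use that for a $\CC$-constructible $\F$ the singular support $\ms{\F}$ is a \emph{complex analytic}, $\CC^\ast$-conic Lagrangian subvariety of $T^\ast\CC^N$, contained in a union of conormal bundles $T^\ast_{X_\alpha}\CC^N$ to the strata of a complex analytic stratification \cite{KS90}. On the smooth locus of each irreducible component $Z$ of $\ms{\F}$ the function $h$ is holomorphic and, by the previous paragraph, has vanishing real part; by the open mapping theorem $h|_Z$ is therefore locally constant. Since $Z$ is conic for the fibrewise $\CC^\ast$-action and $h$ is homogeneous of degree $1$ in $\zeta$, a locally constant homogeneous function must vanish, so $h\equiv0$ on $\ms{\F}$. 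Combined with $\RR_+$-conicity this yields $\ms{\F}\subseteq N^\ast(\text{fibres of }\pi)$ away from the origin, and the microlocal criterion then gives that $\F$ is locally constant along the $\CC^\ast$-orbits, i.e.\ monodromic.

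I expect the main difficulty to be bookkeeping rather than conceptual: one must pin down the precise two directions of the Kashiwara--Schapira criterion relating $\ms{\F}$ to local constancy along the orbit foliation, route it through the fibration $\pi$ with the fixed point $0$ handled separately, and justify carefully that $\CC$-constructibility forces $\ms{\F}$ to be not merely $\RR_+$-conic but genuinely complex analytic and $\CC^\ast$-conic, since it is exactly this complex analyticity that upgrades $\Re(h)=0$ to $h=0$ through the open mapping theorem. That the hypothesis cannot be dropped is visible from the sheaf $\CC_{U}$ of an open angular sector $U$, which is $\RR_+$-conic but neither $\CC$-constructible nor monodromic: its singular support contains conormals to the real boundary rays, on which $h$ does not vanish.
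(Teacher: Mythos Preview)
Your proof is correct and takes a genuinely different route from the paper's. The paper first restricts to complex lines through the origin to reduce to $N=1$, then argues pointwise: $\CC$-constructibility gives a finite singular set $\{P_1,\dots,P_k\}\subset\CC$, $\RR_+$-conicity forces constancy along each real ray $\RR_+P_i$, hence the vanishing cycles $\phi_{h_i}(\G)$ vanish for $h_i(x)=x-P_i$; an argument with the perverse $t$-structure (borrowed from Sabbah) then shows the perverse cohomology sheaves ${}^pH^j(\G)$ are smooth on $\CC^\ast$, which yields monodromicity. Your argument bypasses both the dimension reduction and the vanishing-cycle/perverse machinery: you encode everything in $\ms{\F}\subset T^\ast\CC^N$, use that $\CC$-constructibility makes $\ms{\F}$ a $\CC^\ast$-conic complex Lagrangian, and upgrade $\Re h=0$ to $h=0$ via the open mapping theorem on each irreducible component. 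The trade-off is that the paper's proof stays within classical constructible/perverse sheaf theory and Deligne's vanishing cycles, whereas yours invokes the microlocal criterion (Kashiwara--Schapira, e.g.\ Proposition~5.4.5) relating $\ms{\F}$ to local constancy along a submersion; in exchange you get a cleaner, dimension-independent argument that makes transparent exactly where $\CC$-constructibility enters (the complex analyticity of $\ms{\F}$), as your closing example $\CC_U$ for a real sector $U$ nicely illustrates.
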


\begin{proof}
By restrictions we may assume that $N=1$. 
By the $\CC$-constructibility of $\G$ there exists a finite subset
$\{P_1, P_2, \ldots, P_k\}\subset\CC$ of $\CC\simeq\RR^2$ 
such that $(H^j\G)|_ {\CC\setminus\{P_1, P_2, \ldots, P_k\}}$ 
is a local system for any $j\in\ZZ$. 
For $1\leq i\leq k$ such that $P_i\neq0$ 
let $\ell_i=\RR_+P_i\simeq\RR_+$ be the real half line in 
$\CC\simeq\RR^2$ passing through the point $P_i$. 
Then by our assumption $(H^j\G)|_{\ell_i}$ is a 
constant sheaf for any $j\in\ZZ$. 
This implies that for the function $h_i : \CC\to\CC, h_i(x)=x-P_i$ such that
$h_i^{-1}(0)=\{P_i\}\subset\CC$ we have $\phi_{h_i}(\G)\simeq0$,
where 
\[ \phi_{h_i} : 
\BDC_{\CC-c}(\CC_\CC)\to  \BDC_{\CC-c}(\CC_{h_i^{-1}(0)})
\]
is Deligne's vanishing cycle functor. 
From now we shall use an argument in Sabbah 
\cite[\S 8]{Sab06}. Let 
\[ {}^p \phi_{h_i} = \phi_{h_i} [-1] : 
\BDC_{\CC-c}(\CC_\CC)\to  \BDC_{\CC-c}(\CC_{h_i^{-1}(0)})
\]
be the perverse (or shifted) vanishing cycle functor. 
Recall that it preserves the perversity. 
For $j\in\ZZ$ let ${}^pH^j(\G)\in\Perv(\CC)$ be the 
$j$-th perverse cohomology sheaf of $\G$.
Then ${}^p \phi_{h_i}({}^pH^j(\G))$ is 
concentrated in degree $0$ 
for any $j\in\ZZ$. 
Hence there exists an isomorphism 
$H^j({}^p \phi_{h_i}(\G)) \simeq H^0({}^p \phi_{h_i}({}^pH^j(\G)))$
for any $j\in\ZZ$. 
We thus obtain ${}^p \phi_{h_i}({}^pH^j(\G))\simeq 0$ for any 
$1\leq i\leq k$ and $j\in\ZZ$. 
This shows that the perverse sheaves 
${}^pH^j(\G)$'s are smooth on $\CC^\ast$ i.e. 
$H^l ({}^pH^j(\G))|_{\CC^\ast}$ 
is a local system on $\CC^\ast$ for any $j, l \in \ZZ$.
Then the assertion immediately follows.
\end{proof}

\subsection{Ind-sheaves}\label{sec:3}
We recall some basic notions 
and results on ind-sheaves. References are made to 
Kashiwara-Schapira \cite{KS01} and \cite{KS06}. 
Let $M$ be a good topological space (which is locally compact, 
Hausdorff, countable at infinity and has finite soft dimension). 
We denote by $\Mod(\CC_M)$ the abelian category of sheaves 
of $\CC$-vector spaces on it and by $\I\CC_M$ 
that of ind-sheaves. Then there exists a 
natural exact embedding $\iota_M : \Mod(\CC_M)\to\I\CC_M$ 
of categories. We sometimes omit it.
It has an exact left adjoint $\alpha_M$, 
that has in turn an exact fully faithful
left adjoint functor $\beta_M$: 
 \[\xymatrix@C=60pt{\Mod(\CC_M)  \ar@<1.0ex>[r]^-{\iota_{M}} 
 \ar@<-1.0ex>[r]_- {\beta_{M}} & \I\CC_M 
\ar@<0.0ex>[l]|-{\alpha_{M}}}.\]

The category $\I\CC_M$ does not have enough injectives. 
Nevertheless, we can construct the derived category $\BDC(\I\CC_M)$ 
for ind-sheaves and the Grothendieck six operations among them. 
We denote by $\otimes$ and $\rihom$ the operations 
of tensor products and internal homs respectively. 
If $f : M\to N$ be a continuous map, we denote 
by $f^{-1}, \rmR f_\ast, f^!$ and $\rmR f_{!!}$ the 
operations of inverse images,
direct images, proper inverse images and proper direct images 
respectively. 
We set also $\rhom := \alpha_M\circ\rihom$. 
Note that $(f^{-1}, \rmR f_\ast)$ and 
$(\rmR f_{!!}, f^!)$ are pairs of adjoint functors.

\subsection{Ind-sheaves on Bordered Spaces}\label{sec:4} 
For the results in this subsection, we refer to 
D'Agnolo-Kashiwara \cite{DK16}. 
A bordered space is a pair $M_{\infty} = (M, \che{M})$ of
a good topological space $\che{M}$ and an open subset $M\subset\che{M}$.
A morphism $f : (M, \che{M})\to (N, \che{N})$ of bordered spaces
is a continuous map $f : M\to N$ such that the first projection
$\che{M}\times\che{N}\to\che{M}$ is proper on
the closure $\var{\Gamma}_f$ of the graph $\Gamma_f$ of $f$ 
in $\che{M}\times\che{N}$. 
The category of good topological spaces embeds into that
of bordered spaces by the identification $M = (M, M)$. 
We define the triangulated category of ind-sheaves on 
$M_{\infty} = (M, \che{M})$ by 
\[\BDC(\I\CC_{M_\infty}) := 
\BDC(\I\CC_{\che{M}})/\BDC(\I\CC_{\che{M}\backslash M}).\]
Let 
\[\mathbf{q} : \BDC(\I\CC_{\che{M}})\to\BDC(\I\CC_{M_\infty})\]
be the quotient functor. 
For a morphism $f : M_\infty\to N_\infty$ 
of bordered spaces, 
we have the Grothendieck's operations 
\begin{align*} 
\otimes &: \BDC(\I\CC_{M_\infty})\times
\BDC(\I\CC_{M_\infty})\to\BDC(\I\CC_{M_\infty}), \\
\rihom &: \BDC(\I\CC_{M_\infty})^{\op}\times
\BDC(\I\CC_{M_\infty})\to\BDC(\I\CC_{M_\infty}), \\
\rmR f_\ast &: \BDC(\I\CC_{M_\infty})\to\BDC(\I\CC_{N_\infty}),\\
f^{-1} &: \BDC(\I\CC_{N_\infty})\to\BDC(\I\CC_{M_\infty}),\\
\rmR f_{!!} &: \BDC(\I\CC_{M_\infty})\to\BDC(\I\CC_{N_\infty}),\\
f^! &: \BDC(\I\CC_{N_\infty})\to\BDC(\I\CC_{M_\infty})
\end{align*}
(see \cite[Definitions 3.3.1 and 3.3.4]{DK16}).
Moreover, there exists a natural embedding 
\[\xymatrix@C=30pt@M=10pt{\BDC(\CC_M)\ar@{^{(}->}[r] & 
\BDC(\I\CC_{M_\infty}).}\]

\subsection{Enhanced Sheaves}\label{sec:5}
For the results in this subsection, see 
Tamarkin \cite{Tama08}, 
Kashiwara-Schapira \cite{KS16-2} and 
D'Agnolo-Kashiwara \cite{DK17}. 
Let $M$ be a good topological space. 
We consider the maps 
\[M\times\RR^2\xrightarrow{p_1, p_2, \mu}M
\times\RR\overset{\pi}{\longrightarrow}M\]
where $p_1, p_2$ are the first and the second projections 
and we set $\pi (x,t):=x$ and 
$\mu(x, t_1, t_2) := (x, t_1+t_2)$. 
Then the convolution functors for 
sheaves on $M \times \RR$ are defined by
\begin{align*}
F_1\Potimes F_2 &:= \rmR \mu_!(p_1^{-1}F_1\otimes p_2^{-1}F_2),\\
\Prhom(F_1, F_2) &:= \rmR p_{1\ast}\rhom(p_2^{-1}F_1, \mu^!F_2).
\end{align*}
We define the triangulated category of enhanced sheaves on $M$ by 
\[\BEC(\CC_M) := \BDC(\CC_{M\times\RR})/\pi^{-1}\BDC(\CC_M). \] 
Let 
\[\Q : \BDC(\CC_{M \times \RR} )  \to\BEC(\CC_M)\]
be the quotient functor. The convolution functors 
are defined also for enhanced sheaves. We denote them 
by the same symbols $\Potimes$, $\Prhom$. 
For a continuous map $f : M \to N $, we 
can define naturally the operations 
$\bfE f^{-1}$, $\bfE f_\ast$, $\bfE f^!$, $\bfE f_{!}$ 
for enhanced sheaves. 
We have also a 
natural embedding $\e : \BDC( \CC_M) \to \BEC( \CC_M)$ defined by 
\[ \e(F) := \Q(\CC_{\{t\geq0\}}\otimes\pi^{-1}F). \] 

For a continuous function $\varphi : U\to \RR$ 
defined on an open subset $U \subset M$ of $M$  we define 
the exponential enhanced sheaf by 
\[ {\rm E}_{U|M}^\varphi := 
\Q(\CC_{\{t+\varphi\geq0\}} ), \]
where $\{t+\varphi\geq0\}$ stands for 
$\{(x, t)\in M\times{\RR}\ |\ x\in U, t+\varphi(x)\geq0\}$.

\subsection{Enhanced Ind-sheaves}\label{sec:6}
We recall some basic notions 
and results on enhanced ind-sheaves. References are made to 
D'Agnolo-Kashiwara \cite{DK16} and 
Kashiwara-Schapira \cite{KS16}. 
Let $M$ be a good topological space.
Set $\RR_\infty := (\RR, \var{\RR})$ for 
$\var{\RR} := \RR\sqcup\{-\infty, +\infty\}$,
and let $t\in\RR$ be the affine coordinate. 
Then we define the triangulated category 
of enhanced ind-sheaves on $M$ by 
\[\BEC(\I\CC_M) := \BDC(\I\CC_{M 
\times\RR_\infty})/\pi^{-1}\BDC(\I\CC_M), \]
where $\pi : M\times\RR_\infty\to M$ is a morphism
of bordered spaces induced by the first projection $M\times\RR\to M$.
The quotient functor
\[\Q : \BDC(\I\CC_{M\times\RR_\infty})\to\BEC(\I\CC_M)\]
has fully faithful left and right adjoints 
$\bfL^\rmE,\bfR^\rmE$ defined by 
\[\bfL^\rmE(\Q K) := (\CC_{\{t\geq0\}}\oplus\CC_{\{t\leq 0\}})
\Potimes K ,\hspace{20pt} \bfR^\rmE(\Q K) :
=\Prihom(\CC_{\{t\geq0\}}\oplus\CC_{\{t\leq 0\}}, K), \]
where $\{t\geq0\}$ stands for 
$\{(x, t)\in M\times\var{\RR}\ |\ t\in\RR, t\geq0\}$ 
and $\{t\leq0\}$ is defined similarly.

We consider the maps 
\[M\times\RR_\infty^2\xrightarrow{p_1, p_2, \mu}M
\times\RR_\infty\]
where $p_1$ and $p_2$ are morphisms
of bordered spaces induced by the projections.
And $\mu$ is a morphism of bordered spaces induced by the map 
$M\times\RR^2\ni(x, t_1, t_2)\mapsto(x, t_1+t_2)\in M\times\RR$.
Then the convolution functors for 
ind-sheaves on $M \times \RR_\infty$ are defined by
\begin{align*}
F_1\Potimes F_2 &:= \rmR\mu_{!!}(p_1^{-1}F_1\otimes p_2^{-1}F_2),\\
\Prihom(F_1, F_2) &:= \rmR p_{1\ast}\rihom(p_2^{-1}F_1, \mu^!F_2).
\end{align*}
The convolution functors 
are defined also for enhanced ind-sheaves. We denote them 
by the same symbols $\Potimes$, $\Prihom$. 
For a continuous map $f : M \to N $, we 
can define also the operations 
$\bfE f^{-1}$, $\bfE f_\ast$, $\bfE f^!$, $\bfE f_{!!}$ 
for enhanced ind-sheaves. For example, 
by the natural morphism $\tl{f}: M \times \RR_{\infty} \to 
N \times \RR_{\infty}$ of bordered spaces associated to 
$f$ we set $\bfE f_\ast ( \Q K)= \Q\big(\rmR \tl{f}_{\ast}(K)\big)$. 
The other operations are defined similarly. 
We thus obtain the six operations $\Potimes$, $\Prihom$,
$\bfE f^{-1}$, $\bfE f_\ast$, $\bfE f^!$, $\bfE f_{!!}$ 
for enhanced ind-sheaves. 
Set $\CC_M^\rmE := \Q 
\Bigl(``\underset{a\to +\infty}{\varinjlim}"\ \CC_{\{t\geq a\}}
\Bigr)\in\BEC(\I\CC_M)$. 
Then we have 
natural embeddings $\e, e : \BDC(\I\CC_M) \to 
\BEC(\I\CC_M)$ defined by 
\begin{align*}
\e(F) & := \Q(\CC_{\{t\geq0\}}\otimes\pi^{-1}F) \\ 
e(F) &:=  \CC_M^\rmE\otimes\pi^{-1}F
\simeq \CC_M^\rmE\Potimes\e(F). 
\end{align*}

For a continuous function $\varphi : U\to \RR$ 
defined on an open subset $U \subset M$ of $M$  we define 
the exponential enhanced ind-sheaf by 
\[\EE_{U|M}^\varphi := 
\CC_M^\rmE\Potimes {\rm E}_{U|M}^\varphi
=
\CC_M^\rmE\Potimes
\Q\CC_{\{t+\varphi\geq0\}}
, \]
where $\{t+\varphi\geq0\}$ stands for 
$\{(x, t)\in M\times\var{\RR}\ |\ t\in\RR, x\in U, 
t+\varphi(x)\geq0\}$.

\subsection{$\SD$-Modules}\label{sec:7}
In this subsection we recall some basic notions 
and results on $\SD$-modules. 
References are made to 
\cite{HTT08}, \cite[\S 7]{KS01}, 
\cite[\S 8, 9]{DK16} and 
\cite[\S 3, 4, 7]{KS16}.  
For a complex manifold $X$ we denote by 
$d_X$ its complex dimension. 
Denote by $\SO_X$ and $\SD_X$ 
the sheaves of holomorphic functions 
and holomorphic differential operators on $X$ respectively. 
Let $\BDC(\SD_X)$ be the bounded derived category 
of left $\SD_X$-modules. 
Moreover we denote by $\BDCcoh(\SD_X)$,
$\BDChol(\SD_X)$ and $\BDCrh(\SD_X)$ the full 
triangulated subcategories
of $\BDC(\SD_X)$ consisting of objects with coherent,
holonomic and regular holonomic cohomologies, 
respectively.
For a morphism $f : X\to Y$ of complex manifolds, 
denote by $\Dotimes, \rhom_{\SD_X}, \bfD f_\ast, \bfD f^\ast$, 
$\DD_X : \BDCcoh(\SD_X)^{\op} 
\simto \BDCcoh(\SD_X)$  
the standard operations for $\SD$-modules. 
The classical solution functor is defined by  
\begin{align*}
Sol_X &: \BDCcoh (\SD_X)^{\op}\to\BDC(\CC_X),
\hspace{30pt}\SM \longmapsto \rhom_{\SD_X}(\SM, \SO_X).
\end{align*}
One defines the ind-sheaf $\SO_X^\rmt$ of tempered 
holomorphic functions 
as the Dolbeault complex with coefficients in the ind-sheaf 
of tempered distributions. 
More precisely, denoting by $X^c$ the 
complex conjugate manifold 
to $X$ and by $X_{\RR}$ the underlying real analytic manifold of $X$,
we set
\[\SO_X^\rmt := \rihom_{\SD_{X^c}}(
\SO_{X^c}, \mathcal{D}b_{X_\RR}^\rmt), \]
where $\mathcal{D}b_{X_\RR}^\rmt$ is the ind-sheaf of 
tempered distributions on $X_\RR$
(for the definition see \cite[Definition 7.2.5]{KS01}). 
Then the tempered solution functor is defined by 
\begin{align*}
Sol_X^\rmt &: \BDCcoh (\SD_X)^{\op}\to\BDC(\I\CC_X), 
\hspace{40pt}
\SM \longmapsto \rihom_{\SD_X}(\SM, \SO_X^\rmt). 
\end{align*}
Note that we have isomorphisms
\begin{align*}
Sol_X(\SM) &\simeq \alpha_XSol_X^\rmt(\SM).
\end{align*}

Let $i : X\times\RR_\infty\to X\times\PP$ be
the natural morphism of bordered spaces and
$\tau\in\CC\subset\PP$ the affine coordinate 
such that $\tau|_\RR$ is that of $\RR$. 
We then define an object $\SO_X^\rmE\in\BEC(\I\SD_X)$ by 
\begin{align*}
\SO_X^\rmE &:= \rihom_{\SD_{X^c}}(
\SO_{X^c}, \mathcal{D}b_{X_\RR}^{\T}) 
\simeq i^!\rihom_{\SD_\PP}(\SE_{\CC|\PP}^{\tau}, 
\SO_{X\times\PP}^\rmt)[2], 
\end{align*}
where $\mathcal{D}b_{X_\RR}^{\T}$ 
stand for the enhanced ind-sheaf 
of tempered distributions on $X_\RR$ 
(for the definition see \cite[Definition 8.1.1]{DK16}). 
We call $\SO_X^\rmE$ the enhanced ind-sheaf 
of tempered holomorphic functions. 
Note that there exists an isomorphism
\[i_0^!\bfR^\rmE\SO_X^\rmE\simeq\SO_X^\rmt, \]
where $i_0 : X\to X\times\RR_\infty$ is the 
inclusion map of bordered spaces induced by $x\mapsto (x, 0)$. 
The enhanced solution functor is defined by 
\begin{align*}
Sol_X^\rmE &: \BDCcoh (\SD_X)^{\op}\to\BEC(\I\CC_X), 
\hspace{40pt} 
\SM \longmapsto \rihom_{\SD_X}(\SM, \SO_X^\rmE). 
\end{align*}
Then for $\SM\in\BDCcoh(\SD_X)$ 
we have an isomorphism
\[Sol^{\rmt}_X(\M)\simeq
i_0^!{{\bfR}^\rmE}Sol_X^{\rmE}(\M).\]
Finally, we recall the following theorem of \cite{DK16}. 
\begin{theorem}[{\cite[Theorem 9.5.3 (Irregular 
Riemann-Hilbert Correspondence)]{DK16}}]\label{cor-4}
The enhanced solution functor induces a 
fully faithful one
\[ Sol_X^\rmE : \BDC_{\rm hol} (\SD_X)^{\rm op}\to\BEC(\I\CC_X). \]
\end{theorem}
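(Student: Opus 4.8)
The statement is the full faithfulness of $Sol_X^\rmE$ on holonomic complexes, and the plan is to deduce it from a reconstruction theorem: that $\SM$ can be recovered from its enhanced solution complex by a de Rham type functor with coefficients in $\SO_X^\rmE$. Full faithfulness is equivalent to the assertion that for all $\SM, \SN \in \BDChol(\SD_X)$ the natural morphism
\[ \rHom_{\SD_X}(\SM, \SN) \longrightarrow \rHom_{\BEC(\I\CC_X)}\bigl(Sol_X^\rmE(\SN), Sol_X^\rmE(\SM)\bigr) \]
is an isomorphism. Since $Sol_X^\rmE$ is a functor this morphism exists; to invert it I would exhibit a left quasi-inverse on the essential image, namely the enhanced de Rham functor $\shom^\rmE(\,\cdot\,, \SO_X^\rmE)$, and reduce everything to the reconstruction isomorphism $\SN \simeq \shom^\rmE(Sol_X^\rmE(\SN), \SO_X^\rmE)$ in $\BDChol(\SD_X)$. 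A retraction of this kind yields both fullness and faithfulness at once.

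The reconstruction is local on $X$, so first I would set up a d\'evissage. Using the stability of holonomicity under the six operations together with the compatibility of $Sol_X^\rmE$ with inverse images, proper direct images, tensor products and duality $\DD_X$, one reduces the problem to a manageable class of \emph{elementary models}. The essential geometric input is that, after a finite sequence of complex blow-ups and a ramified covering (resolution of turning points, due to Mochizuki and Kedlaya), the meromorphic structure of a holonomic module along its singular normal crossing divisor $D$ becomes good, i.e. a direct sum of terms $\SE_{X\setminus D}^{\varphi_i} \Dotimes \R_i$ with $\R_i$ regular holonomic and $\varphi_i$ meromorphic with poles on $D$. Because $Sol_X^\rmE$ intertwines the relevant operations, it suffices to verify reconstruction for these good elementary models.

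For the elementary models I would compute $Sol_X^\rmE$ explicitly. When $\R$ is regular, reconstruction reduces to Kashiwara's classical regular Riemann--Hilbert correspondence, transported into $\BEC(\I\CC_X)$ through the embedding $e$. The genuinely new point is that the exponential factor $e^{\varphi}$ is recorded by the affine variable $t$ of the enhanced direction: one obtains, up to shift, $Sol_X^\rmE(\SE^{\varphi} \Dotimes \R) \simeq \EE_{X\setminus D | X}^{\mathrm{Re}\,\varphi} \Potimes Sol_X^\rmE(\R)$, and the Stokes phenomenon---how the various $\SE^{\varphi_i}$ are glued across Stokes directions---is faithfully encoded by the enhanced ind-sheaf. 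Feeding this back into $\shom^\rmE(\,\cdot\,, \SO_X^\rmE)$ and using $i_0^! \bfR^\rmE \SO_X^\rmE \simeq \SO_X^\rmt$ together with the comparison $Sol_X^\rmt(\SM) \simeq i_0^! \bfR^\rmE Sol_X^\rmE(\SM)$ recalled above should return $\SE^\varphi \Dotimes \R$, closing the reduction.

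The hard part will not be the categorical bookkeeping but the asymptotic analysis underpinning it. I expect two main obstacles: first, proving that $Sol_X^\rmE(\SM)$ is $\RR$-constructible, which rests on the theory of tempered holomorphic functions and on subanalytic curve-test estimates for $\SO_X^\rmt$ and $\SO_X^\rmE$; and second, the explicit reconstruction for the \emph{irregular} elementary models, which ultimately depends on the Hukuhara--Levelt--Turrittin decomposition and on the deep resolution-of-turning-points results of Mochizuki and Kedlaya. These analytic and classification inputs, rather than the formal adjunction arguments, are where the real difficulty of the irregular Riemann--Hilbert correspondence resides.
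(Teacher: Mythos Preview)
The paper does not give its own proof of this statement: it is quoted verbatim as \cite[Theorem 9.5.3]{DK16} and used as a black box, so there is no argument in the paper to compare against. Your outline is a reasonable high-level sketch of the strategy actually carried out by D'Agnolo and Kashiwara in \cite{DK16} (reconstruction via an enhanced de Rham functor, d\'evissage, reduction to good elementary models through the Mochizuki--Kedlaya resolution of turning points, and explicit computation for exponential twists of regular modules), but reproducing that proof is well beyond what the present paper requires or attempts.
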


\section{Fourier Transforms of Regular Holonomic 
$\SD$-modules}\label{sec:9}
In this section we inherit the situation 
and the notations in 
Section \ref{sec:1}. Let
\[X\overset{p}{\longleftarrow}X\times 
Y\overset{q}{\longrightarrow}Y\]
be the projections. 
Then by Katz-Laumon \cite{KL85}, for an 
algebraic holonomic $\SD_X$-module 
$\SM\in\Modhol(\SD_X)$ we have an isomorphism
\[\SM^\wedge\simeq
\bfD q_\ast(\bfD p^\ast\SM\Dotimes \SO_{X\times Y}
e^{- \langle z, w \rangle }),\]
where $\bfD p^\ast, \bfD q_\ast, \Dotimes$ are
the operations for algebraic $\SD$-modules
and $\SO_{X\times Y}
e^{- \langle z, w \rangle }$ is the integral connection
of rank one on $X\times Y$ associated to the canonical
paring $\langle, \rangle : X\times Y\to\CC$.
In particular the right hand side is concentrated in degree zero. 
Let $\var{X}\simeq\PP^N$ (resp. $\var{Y}\simeq\PP^N$)
be the projective compactification of $X$ (resp. $Y$). 
By the inclusion map $i_X : 
X=\CC^N\xhookrightarrow{\ \ \ }\var{X}=\PP^N$
we extend a holonomic $\SD_X$-module $\SM\in\Modhol(\SD_X)$
on $X$ to the one $\tl{\SM} := i_{X\ast}\SM\simeq\bfD i_{X\ast}\SM$ 
on $\var{X}$.
Denote by $\var{X}^{\an}$ the underlying complex manifold
of $\var{X}$ and define the analytification
$\tl{\SM}^{{\ }\an}\in\Modhol(\SD_{\var{X}^{\an}})$
of $\tl{\SM}$ by
$\tl{\SM}^{{\ }\an} := \SO_{\var{X}^{\an}}
\otimes_{\SO_{\var{X}}}\tl{\SM}$.
Then we set 
\[Sol_{\var X}^\rmE(\tl\SM) := 
Sol_{\var{X}^{\an}}^\rmE(\tl{\SM}^{{\ }\an})
\in\BEC(\I\CC_{\var{X}^{\an}}).\]
Similarly for the Fourier transform $\SM^\wedge\in\Modhol(\SD_Y)$, 
by the inclusion map $i_Y : 
Y=\CC^N\xhookrightarrow{\ \ \ }\var{Y}=\PP^N$ 
we define $Sol_{\var Y}^\rmE(\tl{\SM^\wedge}) 
 \in\BEC(\I\CC_{\var{Y}^{\an}})$. 
Let
\[\var{X}^{\an}\overset{\var{p}}{\longleftarrow}
\var{X}^{\an}\times\var{Y}^{\an}\overset{\var{q}}{\longrightarrow}
\var{Y}^{\an}\]
be the projections. 
Then the following theorem is essentially due to
Kashiwara-Schapira \cite{KS16-2} and D'Agnolo-Kashiwara \cite{DK17}. 
For $F\in\BEC(\I\CC_{\var{X}^{\an}})$ we set
\[{}^\rmL F := \bfE\var{q}_{*}(\bfE\var{p}^{-1}F
\Potimes\EE_{X\times Y|\var{X}\times\var{Y}}^{
-{\Re} \langle  z, w \rangle }[N])
\in\BEC(\I\CC_{\var{Y}^{\an}} ) \]
(here we denote $X^{\an}\times Y^{\an}$ etc.
by $X\times Y$ etc. for short) and call 
it the Fourier-Sato (Fourier-Laplace) transform of $F$. 

\begin{theorem}\label{thm-1}
For $\SM\in\Mod_{\rm hol}(\SD_X)$ there exists an isomorphism
\[Sol_{\var Y}^\rmE(\tl{\SM^\wedge})
\simeq{}^\rmL Sol_{\var X}^\rmE(\tl{\SM}).\]
\end{theorem}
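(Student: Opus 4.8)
The plan is to apply the enhanced solution functor $Sol^\rmE$ to the Katz--Laumon description of the Fourier transform and to convert the three $\SD$-module operations appearing in it, one at a time, into the corresponding operations on enhanced ind-sheaves by means of the compatibility theorems of \cite{DK16} and \cite{DK17}. By the Katz--Laumon isomorphism recalled above we have $\SM^\wedge \simeq \bfD q_\ast(\bfD p^\ast\SM \Dotimes \SO_{X\times Y}e^{-\langle z, w\rangle})$ as algebraic $\SD_Y$-modules, so the operations to be transported are the inverse image $\bfD\var{p}^\ast$, the tensor product $\Dotimes$ with the exponential integral connection, and the proper direct image $\bfD\var{q}_\ast$.

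First I would lift this formula to the compactifications. Writing $\var{p}, \var{q}$ for the two projections of $\var{X}^{\an}\times\var{Y}^{\an}$, I would produce a holonomic $\SD_{\var{X}\times\var{Y}}$-module $\SN$ that extends $\bfD p^\ast\SM \Dotimes \SO_{X\times Y}e^{-\langle z, w\rangle}$ across the boundary $(\var{X}\times\var{Y})\setminus(X\times Y)$ and satisfies $\tl{\SM^\wedge}\simeq\bfD\var{q}_\ast\SN$ after analytification. Since $\var{X}=\PP^N$ is compact, the projection $\var{q}$ is proper; hence holonomicity is preserved, the algebraic and analytic direct images agree, and the hypotheses needed for the direct-image comparison are automatic. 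Applying $Sol^\rmE_{\var{Y}}$ and invoking the compatibility of the enhanced solution functor with proper direct images gives \[Sol^\rmE_{\var{Y}}(\bfD\var{q}_\ast\SN)\simeq\bfE\var{q}_\ast Sol^\rmE_{\var{X}\times\var{Y}}(\SN).\]

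It then remains to compute $Sol^\rmE_{\var{X}\times\var{Y}}(\SN)$. Factoring $\SN$ as the inverse image $\bfD\var{p}^\ast\tl{\SM}$ tensored with the exponential connection, I would apply three further compatibilities: the one for inverse images, $Sol^\rmE_{\var{X}\times\var{Y}}(\bfD\var{p}^\ast\tl{\SM})\simeq\bfE\var{p}^{-1}Sol^\rmE_{\var{X}}(\tl{\SM})$ up to the relative-dimension shift; the one turning $\Dotimes$ into the convolution $\Potimes$; and, most importantly, the identification of $Sol^\rmE_{\var{X}\times\var{Y}}$ of the exponential connection $\SO_{X\times Y}e^{-\langle z, w\rangle}$ (in its holonomic extension) with the exponential enhanced ind-sheaf $\EE_{X\times Y|\var{X}\times\var{Y}}^{-{\Re}\langle z, w\rangle}$. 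Collecting the relative-dimension shifts of $\var{p}$ and $\var{q}$ and the normalization of the exponential solution complex into the single shift $[N]$ of the definition, these assemble into \[Sol^\rmE_{\var{Y}}(\tl{\SM^\wedge})\simeq\bfE\var{q}_\ast\bigl(\bfE\var{p}^{-1}Sol^\rmE_{\var{X}}(\tl{\SM})\Potimes\EE_{X\times Y|\var{X}\times\var{Y}}^{-{\Re}\langle z, w\rangle}[N]\bigr)={}^\rmL Sol^\rmE_{\var{X}}(\tl{\SM}),\] which is the assertion.

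The main obstacle is the identification of the exponential solutions together with the boundary extension in the first step: because $e^{-\langle z, w\rangle}$ is irregular along $(\var{X}\times\var{Y})\setminus(X\times Y)$, its enhanced solution complex is not a local system but exactly the exponential enhanced ind-sheaf recording this essential singularity, and one must check that the chosen extension $\SN$ is the one whose enhanced solutions produce $\EE_{X\times Y|\var{X}\times\var{Y}}^{-{\Re}\langle z, w\rangle}$. A second genuine point is the passage from the algebraic Fourier transform on the affine spaces $X, Y$ to the proper analytic integral transform on $\var{X}\times\var{Y}$, namely the verification that $\tl{\SM^\wedge}\simeq\bfD\var{q}_\ast\SN$. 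Once these are in place, the compatibility isomorphisms of \cite{DK16} and \cite{DK17} apply directly and the remaining shift bookkeeping is routine.
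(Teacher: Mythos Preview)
The paper does not actually give its own proof of this theorem: it states the result and attributes it to Kashiwara--Schapira \cite{KS16-2} and D'Agnolo--Kashiwara \cite{DK17}, so there is no argument in the paper to compare against. Your outline is the standard route to this statement and is essentially what those references carry out: one transports the Katz--Laumon integral-transform description of $\SM^\wedge$ through the compatibilities of $Sol^\rmE$ with proper direct image, non-characteristic inverse image, and $\Dotimes$ (all from \cite{DK16}), together with the identification $Sol^\rmE$ of the exponential connection with the exponential enhanced ind-sheaf $\EE^{-\Re\langle z,w\rangle}_{X\times Y|\var X\times\var Y}$. The two ``obstacles'' you flag are exactly the points handled in \cite{KS16-2} and \cite[\S9.6]{DK16}, so your proposal is correct and aligned with the cited sources.
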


From now on, we focus our attention on Fourier transforms 
of regular holonomic $\SD_X$-modules.
For such a $\SD_X$-module $\SM$, 
by \cite[Theorem 7.1.1]{HTT08} we have 
an isomorphism $Sol_{\var X} (\tl{\SM}) 
\simeq i_{X!}Sol_X(\SM)$, where the right hand side 
$i_{X!} Sol_X(\SM)\in D^b(\CC_{\var{X}^{\an}})$ 
is the extension by zero of the classical solution complex 
of $\SM$ to $\var{X}^{\an}$. Moreover 
by \cite[Proposition 9.1.3 and Corollary 9.4.9]{DK16} 
there exists an isomorphism 
\[Sol_{\var X}^\rmE(\tl{\SM})
\simeq
\CC_{\var{X}^{\an}}^\rmE\Potimes\e(i_{X!}Sol_X(\SM)).\]
For an enhanced sheaf $F\in\BEC(\CC_{\var{X}^{\an}})$
on $\var{X}^{\an}$ we define its Fourier-Sato 
(Fourier-Laplace) transform
${}^\rmL F\in\BEC(\CC_{\var{Y}^{\an}})$ by
\[{}^\rmL F := \bfE\var{q}_{*}(\bfE\var{p}^{-1}F\Potimes
\rmE_{X\times Y|\var{X}\times\var{Y}}^{
-{\Re} \langle z, w \rangle}[N])
\in\BEC(\CC_{\var{Y}^{\an}}).\]
Since we have
\[{}^\rmL(\CC^\rmE_{\var{X}^{\an}}\Potimes( \cdot ))
\simeq
\CC^\rmE_{\var{Y}^{\an}}\Potimes{}^\rmL( \cdot ), \]
for the calculation of $Sol_{\var Y}^\rmE(\tl{\SM^\wedge})$ 
it suffices to calculate the Fourier-Sato transform of 
the enhanced sheaf $\e(i_{X!}Sol_X(\SM))
\in\BEC( \CC_{\var{X}^{\an}})$
on $\var{X}^{\an}$. 
The following theorem is due to Brylinski \cite{Bry86}. 
Here we give a new geometric proof to it. 

\begin{theorem}\label{blythm}
Let $\SM$ be an algebraic regular holonomic 
$\SD$-module on $X= \CC^N$. 
Assume that $Sol_X(\SM)$ is monodromic. Then 
$\SM^\wedge$ is also a regular holonomic $\SD_Y$-module 
and $Sol_Y(\SM^\wedge)$ is monodromic. 
\end{theorem}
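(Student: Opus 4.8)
The plan is to reduce, via the enhanced Riemann--Hilbert correspondence and the Fourier--Sato transform, to a purely geometric computation for the monodromic sheaf $\F := Sol_X(\SM)$. First I would combine Theorem~\ref{thm-1} with the structure formula $Sol_{\var X}^\rmE(\tl\SM) \simeq \CC_{\var{X}^{\an}}^\rmE \Potimes \e(i_{X!}\F)$, valid for regular holonomic $\SM$, and with the compatibility ${}^\rmL(\CC_{\var{X}^{\an}}^\rmE \Potimes (\cdot)) \simeq \CC_{\var{Y}^{\an}}^\rmE \Potimes {}^\rmL(\cdot)$ recalled just before the statement, to obtain
\[ Sol_{\var Y}^\rmE(\tl{\SM^\wedge}) \simeq \CC_{\var{Y}^{\an}}^\rmE \Potimes {}^\rmL \e(i_{X!}\F). \]
Everything is thereby reduced to computing the enhanced Fourier--Sato transform ${}^\rmL \e(i_{X!}\F)$ of the enhanced sheaf attached to the monodromic sheaf $\F$.

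The heart of the proof, and the step I expect to be the main obstacle, is the claim that since $\F$ is monodromic, hence $\RR_+$-conic, its enhanced Fourier--Sato transform carries no nontrivial exponential factor: there is a $\CC$-constructible sheaf $\G$ on $Y$, namely the classical Fourier--Sato transform of $\F$, with
\[ {}^\rmL \e(i_{X!}\F) \simeq \e(i_{Y!}\G). \]
The geometric mechanism is that $\F$ is locally constant along each orbit $\RR_+ \cdot z_0$, whereas the exponential kernel $\rmE^{-\Re\langle z, w\rangle}$ restricts on such a ray to $s \mapsto e^{-s\Re\langle z_0, w\rangle}$; when one forms the convolution and pushes forward by $\var{q}$, the integration along the ray produces a constant contribution exactly when $\Re\langle z_0, w\rangle > 0$ and nothing otherwise. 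This is precisely the classical Fourier--Sato kernel supported on $\{\Re\langle z, w\rangle \le 0\}$, and it shows that the enhanced $t$-direction collapses, leaving a genuine constructible sheaf. Turning this heuristic into a proof with the six operations for enhanced ind-sheaves---decomposing along the $\RR_+$-action, evaluating the fiberwise push-forward, and above all controlling the contributions of the boundary divisors $\var{X}^{\an}\setminus X$ and $\var{Y}^{\an}\setminus Y$ of the projective compactifications---is the technical core of the argument.

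Granting this key isomorphism, the two assertions follow formally. We obtain $Sol_{\var Y}^\rmE(\tl{\SM^\wedge}) \simeq \CC_{\var{Y}^{\an}}^\rmE \Potimes \e(i_{Y!}\G)$, an enhanced ind-sheaf of the ``trivial exponential'' shape that characterizes the essential image of regular holonomic modules under the fully faithful correspondence of Theorem~\ref{cor-4}; since $\SM^\wedge$ is already holonomic, this forces it to be regular holonomic and identifies $Sol_Y(\SM^\wedge) \simeq \G$. Finally $\G$ is the Fourier--Sato transform of the $\RR_+$-conic sheaf $\F$, so it is again $\RR_+$-conic, and it is $\CC$-constructible because $\SM^\wedge$ is holonomic; Lemma~\ref{lem1} then shows that $\G$, and hence $Sol_Y(\SM^\wedge)$, is monodromic. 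This establishes both conclusions of the theorem.
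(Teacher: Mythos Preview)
Your overall strategy coincides with the paper's: reduce via Theorem~\ref{thm-1} and the regular-case formula for $Sol^\rmE_{\var X}(\tl\SM)$ to computing ${}^\rmL\e(i_{X!}\F)$ for the monodromic sheaf $\F=Sol_X(\SM)$, identify this with $\e(i_{Y!}\F^\wedge)$ for the classical Fourier--Sato transform $\F^\wedge$, and then read off both conclusions. Two points deserve correction.

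First, the ``technical core'' you anticipate having to prove by hand is already a theorem in the literature: the paper simply invokes \cite[Theorem~5.7]{KS16-2}, which for an $\RR_+$-conic sheaf $\F$ gives exactly ${}^\rmL\e(i_{X!}\F)\simeq\e(i_{Y!}\F^\wedge)$. No boundary analysis on $\var X\setminus X$ is required beyond what is packaged into that result. The $\CC$-constructibility of $\F^\wedge$ likewise comes directly from \cite[Proposition~10.3.18]{KS90}, independently of any identification with $Sol_Y(\SM^\wedge)$.

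Second, and more seriously, your deduction of regularity has a gap. Theorem~\ref{cor-4} asserts only that $Sol^\rmE$ is fully faithful on holonomic $\SD$-modules; it does \emph{not} characterize the essential image, so you cannot conclude regularity merely from the shape $\CC^\rmE_{\var Y}\Potimes\e(\,\cdot\,)$. The paper closes this gap as follows: applying $i_0^!\bfR^\rmE$ yields $Sol_{\var Y}(\tl{\SM^\wedge})\simeq i_{Y!}\F^\wedge$, so $i_{Y!}\F^\wedge$ is an algebraic constructible sheaf on $\var Y$; the \emph{classical} Riemann--Hilbert correspondence \cite[Corollary~7.2.4]{HTT08} then produces a regular holonomic $\SN\in\Modrh(\SD_{\var Y})$ with $Sol_{\var Y}(\SN)\simeq i_{Y!}\F^\wedge$; one checks $Sol^\rmE_{\var Y}(\SN)\simeq Sol^\rmE_{\var Y}(\tl{\SM^\wedge})$, and \emph{now} the full faithfulness of Theorem~\ref{cor-4} gives $(\tl{\SM^\wedge})^{\an}\simeq\SN^{\an}$ as analytic $\SD$-modules. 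A final, non-trivial step is then required: one must pass from this analytic isomorphism back to algebraic regularity of $\tl{\SM^\wedge}$, and for this the paper invokes Lemma~\ref{leman} (Brylinski's result that analytification is an equivalence on $\BDCrh$ over a smooth projective variety). Your proposal omits both the detour through the classical correspondence and this GAGA-type comparison.
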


\begin{proof}
By the above argument we have isomorphisms 
\begin{align*}
&Sol_{\var Y}^\rmE(\tl{\SM^\wedge})
\simeq 
{}^\rmL Sol_{\var X}^\rmE(\tl{\SM}) 
\\ 
&\simeq
{}^\rmL\Big(
\CC_{\var X^\an}^\rmE\Potimes\e\big( i_{X!} Sol_X(\SM)\big)\Big)\\
&\simeq
\CC_{\var Y^\an}^\rmE\Potimes{}^\rmL\Big(\e\big( i_{X!} 
Sol_X(\SM)\big)\Big)\\
&\simeq
\CC_{\var Y^\an}^\rmE\Potimes\e\big( i_{Y!}Sol_X(\SM)^\wedge \big), 
\end{align*}
where $( \cdot )^\wedge$ stands for 
the Fourier-Sato transform for 
$\RR_+$-conic sheaves (see \cite{KS90}) and 
in the last isomorphism we applied 
\cite[Theorem 5.7]{KS16-2} to the $\RR_+$-conic sheaf 
$Sol_X(\SM)$. 
Note that $Sol_X(\M)^\wedge$ is not only $\RR_+$-conic
but also $\CC$-constructible by 
\cite[Proposition 10.3.18]{KS90}. 
Hence it is monodromic by Lemma \ref{lem1}. 
Moreover by applying the functor 
$i_0^!{\rmR^\rmE} ( \cdot )$ to the isomorphism 
$Sol_{\var Y}^\rmE(\tl{\SM^\wedge}) \simeq 
\CC_{\var Y^\an}^\rmE\Potimes\e\big( i_{Y!}Sol_X(\SM)^\wedge \big)$ 
we obtain an isomorphism 
\[ Sol_{\var Y} (\tl{\SM^\wedge}) \simeq 
i_{Y!}Sol_X(\SM)^\wedge. \]
This implies that $i_{Y!}Sol_X(\SM)^\wedge$ is an 
(algebraic) constructible sheaf on the 
algebraic variety $\var Y$. By 
\cite[Corollary 7.2.4]{HTT08} we can take 
a regular holonomic $\SD$-module 
$\SN\in \Mod_{\rm rh}(\SD_{\var Y})$ on $\var Y$ 
such that $Sol_{\var Y}(\SN)\simeq 
i_{Y!} Sol_X(\SM)^\wedge$. 
Then we have isomorphisms 
\begin{align*}
&Sol_{\var Y}^\rmE(\tl{\SM^\wedge})\simeq
\CC_{\var Y^\an}^\rmE\Potimes\e\big( i_{Y!} Sol_X(\SM)^\wedge\big)\\
&\simeq
\CC_{\var Y^\an}^\rmE\Potimes\e\big(Sol_{\var Y}(\SN)\big)\\
&\simeq
Sol_{\var Y}^\rmE( \SN ). 
\end{align*}
By Theorem \ref{cor-4} 
we thus obtain an isomorphism 
\[ ( \tl{\SM^\wedge} )^{\an} 
\simeq \SN^{\an} \quad \in 
\Mod_{\rm rh}(\SD_{{\var Y}^{\an}}) \]
of analytic $\SD$-modules on ${\var Y}^{\an}$. 
Then the assertion follows from Lemma \ref{leman} 
of Brylinski \cite[Th\'eor\`eme 7.1]{Bry86} below. 
\end{proof}

\begin{lemma}[Brylinski {\cite[Th\'eor\`eme 7.1]{Bry86}}]\label{leman} 
Let $Z$ be a smooth projective variety. Then 
the analytification functor 
\[ ( \cdot )^{\an} : \BDC_{\rm rh}(\SD_{Z}) 
\rightarrow \BDC_{\rm rh}(\SD_{Z^{\an}})
\]
is an equivalence of categories. 
\end{lemma}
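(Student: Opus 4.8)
The plan is to reduce the statement to the regular Riemann--Hilbert correspondence in both the algebraic and the analytic settings, using the properness of $Z$ to identify the resulting categories of constructible sheaves. First I would invoke the analytic regular Riemann--Hilbert correspondence of Kashiwara and Mebkhout (see \cite{HTT08}): the solution functor
\[ Sol_{Z^{\an}} : \BDC_{\rm rh}(\SD_{Z^{\an}})^{\op} \simto \BDC_{\CC-c}(\CC_{Z^{\an}}) \]
is an equivalence onto the derived category of $\CC$-constructible sheaves on $Z^{\an}$. On the algebraic side one has the parallel equivalence
\[ Sol_{Z} : \BDC_{\rm rh}(\SD_{Z})^{\op} \simto \BDC_{\CC-c}^{\rm alg}(\CC_{Z^{\an}}), \]
whose target is the derived category of algebraically constructible sheaves. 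Since $Z$ is projective, every analytically constructible sheaf on $Z^{\an}$ is in fact algebraically constructible, so the two target categories coincide; this is the only point at which properness of $Z$ enters.

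The key compatibility to establish next is that, for $\SM \in \BDC_{\rm rh}(\SD_Z)$, the analytic solution complex of the analytification agrees with the algebraic solution complex,
\[ Sol_{Z^{\an}}(\SM^{\an}) \simeq Sol_{Z}(\SM) \quad \text{in } \BDC_{\CC-c}(\CC_{Z^{\an}}), \]
naturally in $\SM$. This is a GAGA-type comparison whose engine is Serre's GAGA for the proper variety $Z$: resolving $\SM$ by induced (locally $\SO_Z$-free) $\SD_Z$-modules expresses both sides through the hypercohomology of coherent $\SO$-modules on $Z$, and algebraic and analytic hypercohomology of coherent sheaves on a proper variety agree. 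Checking functoriality then upgrades these isomorphisms into an isomorphism of functors $Sol_{Z^{\an}} \circ (\,\cdot\,)^{\an} \simeq Sol_Z$.

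With this in hand I would conclude by a two-out-of-three argument. The compatibility isomorphism places the analytification functor in a commuting triangle whose other two sides $Sol_Z$ and $Sol_{Z^{\an}}$ are equivalences, by the two Riemann--Hilbert correspondences together with the identification of constructible categories above. Hence $(\,\cdot\,)^{\an} : \BDC_{\rm rh}(\SD_Z) \to \BDC_{\rm rh}(\SD_{Z^{\an}})$ is an equivalence. I expect the main obstacle to be this GAGA comparison together with the coincidence of the algebraic and analytic constructible categories, that is, the essential-surjectivity content: one must both algebraize an arbitrary analytic regular holonomic $\SD$-module through its constructible solution complex and verify the de Rham comparison globally over the proper $Z$. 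Fully faithfulness, by contrast, follows formally once the comparison isomorphism is known.
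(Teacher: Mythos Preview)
Your proposal is correct and follows essentially the same route as the paper: both place the analytification functor in a commuting square with the algebraic and analytic Riemann--Hilbert correspondences and conclude by two-out-of-three, the non-formal input being the identification of algebraically and analytically constructible sheaves on the projective variety $Z$ (the paper names Chow's theorem here). Your GAGA discussion for the compatibility $Sol_{Z^{\an}}(\SM^{\an})\simeq Sol_Z(\SM)$ is more than the paper spells out---in the conventions of \cite{HTT08} the algebraic solution functor is defined through analytification, so commutativity of the square is essentially definitional---but the overall argument is the same.
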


\begin{proof}
This result is due to Brylinski \cite[Th\'eor\`eme 7.1]{Bry86}. 
We shall give a new proof to it. 
Let $\BDC_{\CC-c}(\CC_{Z})$ (resp. 
$\BDC_{\CC-c}(\CC_{Z^{\an}}) $) be the 
derived category of $\CC$-constructible 
sheaves on the algebraic variety $Z$ 
(resp. the complex manifold $Z^{\an}$). 
Then we have a commutative diagram of functors 
\begin{equation}
\begin{CD}
\BDC_{\rm rh}(\SD_{Z})  @>{\sim}>> 
\BDC_{\CC-c}(\CC_{Z}) 
\\
@V{( \cdot )^{\an}}VV   @VVV
\\
\BDC_{\rm rh}(\SD_{Z^{\an}})  @>{\sim}>>  
\BDC_{\CC-c}(\CC_{Z^{\an}}), 
\end{CD}
\end{equation}
where the horizontal arrows are 
the Riemann-Hilbert correspondences of 
algebraic and analytic $\SD$-modules 
respectively (see e.g. 
\cite[Theorem 7.2.2]{HTT08}). By Chow's 
theorem the right vertical arrow 
\[ \BDC_{\CC-c}(\CC_{Z}) \rightarrow 
\BDC_{\CC-c}(\CC_{Z^{\an}}) \]
is also an equivalence of categories. 
Then the assertion immediately follows. 
\end{proof}

For $s\in\RR_+$ let 
\[m_s : Y=\CC^N \simto Y=\CC^N, \quad w\mapsto sw\]
be the multiplication by $s$.
We shall use also the morphism 
$\ell_s : Y\times\RR_\infty\to Y\times\RR_\infty$ 
on the bordered space $Y\times\RR_\infty$ 
induced by the diagonal action 
\[\ell_s : Y\times \RR \simto Y\times \RR, 
\quad (w, t)\mapsto (sw, st).\]
Let $f : X\times Y\times\RR\to X$, 
$g : X\times Y\times \RR \to Y\times\RR$
be the projections. Then the following lemma 
was obtained in (the proof) of 
Ito-Takeuchi \cite[Theorem 4.4]{IT18}. 

\begin{lemma}\label{lem3}
Let $\F\in\BDC(\CC_X)$. 
Then we have an isomorphism
\[{}^{\rmL}(\varepsilon ( i_{X!} \F ))\simeq
\Q \big( \tl{i_{Y}}_!{\rmR}g_!(\CC_{\{t-{\Re}\langle z, w\rangle\geq0\}}
\otimes f^{-1}\F)[N] \big) \]
of enhanced sheaves.
\end{lemma}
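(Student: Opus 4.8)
The plan is to unwind the definitions of the Fourier--Sato transform ${}^\rmL(-)$, of the embedding $\e$, and of the exponential enhanced sheaf, so as to reduce the statement to an explicit computation of genuine sheaves on $\var X^\an\times\var Y^\an\times\RR$, and then to push forward along $\var q$. First I would use the compatibility of $\e$ with inverse images, namely $\bfE\var p^{-1}\e(i_{X!}\F)\simeq\e(\var p^{-1}i_{X!}\F)$, together with the base change isomorphism $\var p^{-1}i_{X!}\F\simeq j_!\,p_X^{-1}\F$ attached to the Cartesian square defined by $i_X$, where $j:=i_X\times\id_{\var Y}$ and $p_X:X\times\var Y\to X$. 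Writing $G:=\var p^{-1}i_{X!}\F$ (supported over $X\times\var Y$), the claim becomes the computation of the convolution $\e(G)\Potimes\rmE_{X\times Y|\var X\times\var Y}^{-\Re\langle z,w\rangle}[N]$ followed by $\bfE\var q_*$.

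Next I would compute this convolution in $\BDC(\CC_{\var X^\an\times\var Y^\an\times\RR})$ directly from the definition $F_1\Potimes F_2=\rmR\mu_!(p_1^{-1}F_1\otimes p_2^{-1}F_2)$. After pulling back, the integrand is $\CC_{\{t_1\ge0,\ t_2-\Re\langle z,w\rangle\ge0\}}\otimes\mu^{-1}\pi^{-1}G$, where I use $\pi\circ\mu=\tl{\pi}$ to rewrite the pulled-back copy of $G$ as $\mu^{-1}\pi^{-1}G$. The projection formula then isolates the purely $t$-directional factor $\rmR\mu_!\,\CC_{\{t_1\ge0,\ t_2-\Re\langle z,w\rangle\ge0\}}$. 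The key geometric point is that over a point $(z,w,t)$ with $(z,w)\in X\times Y$ the corresponding $\mu$-fibre is the compact interval $\{0\le t_1\le t-\Re\langle z,w\rangle\}$, which is contractible and nonempty precisely when $t-\Re\langle z,w\rangle\ge0$; hence $\rmR\mu_!\,\CC_{\{t_1\ge0,\ t_2-\Re\langle z,w\rangle\ge0\}}\simeq\CC_{\{t-\Re\langle z,w\rangle\ge0\}}$. Since both this factor and the exponential sheaf are supported over $X\times Y$, restricting $\pi^{-1}G$ there identifies it with $f^{-1}\F$, and the convolution becomes $\Q\bigl(\kappa_!(\CC_{\{t-\Re\langle z,w\rangle\ge0\}}\otimes f^{-1}\F)[N]\bigr)$, where $\kappa:X\times Y\times\RR\hookrightarrow\var X^\an\times\var Y^\an\times\RR$ is the open inclusion.

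Finally I would apply $\bfE\var q_*=\Q\circ\rmR(\var q\times\id_\RR)_*$. Here the compactification pays off: $\var q$ is proper, so $\rmR(\var q\times\id_\RR)_*=\rmR(\var q\times\id_\RR)_{!}$, and by functoriality of proper direct image $\rmR(\var q\times\id_\RR)_!\,\kappa_!=\rmR\bigl((\var q\times\id_\RR)\circ\kappa\bigr)_!=\rmR(\lambda\circ g)_!=\lambda_!\,\rmR g_!$, where $\lambda:=i_Y\times\id_\RR$ and $(\var q\times\id_\RR)\circ\kappa=\lambda\circ g$ because both maps simply forget the $X$-coordinate. Since $\lambda_!=\tl{i_Y}_!$ is extension by zero along the open inclusion, this yields exactly $\Q\bigl(\tl{i_Y}_!\,\rmR g_!(\CC_{\{t-\Re\langle z,w\rangle\ge0\}}\otimes f^{-1}\F)[N]\bigr)$, as claimed. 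Conceptually, this is the step where the non-properness of $g$ (the $X$-fibres of the support being the non-compact half-spaces $\{\Re\langle z,w\rangle\le t\}$) is absorbed into the proper map $\var q$.

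I expect the main obstacle to be the convolution computation of the second paragraph: one must verify the fibrewise identity carefully on the boundary locus $t=\Re\langle z,w\rangle$, and, more delicately, control the behaviour of all the sheaves along the divisors $\var X\setminus X$ and $\var Y\setminus Y$, checking that the extra contributions there lie in $\pi^{-1}\BDC(\CC_{\var X^\an\times\var Y^\an})$ and hence vanish after applying $\Q$. By contrast, the compatibility of $\e$ with $\bfE\var p^{-1}$ and the base-change isomorphism are routine and can be cited, and the concluding properness argument is short once the support of the convolution has been pinned down to $X\times Y\times\RR$.
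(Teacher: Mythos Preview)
Your proposal is correct. The paper itself does not supply a proof of this lemma: it simply records that the statement ``was obtained in (the proof) of Ito--Takeuchi \cite[Theorem 4.4]{IT18}'' and moves on. So there is no in-paper argument to compare against, and your direct unwinding of the definitions is exactly the kind of computation one would expect to find in the cited reference.

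A couple of small remarks on your write-up. First, your worry about contributions along $\var X\setminus X$ and $\var Y\setminus Y$ is unnecessary: the factor $\rmE_{X\times Y|\var X\times\var Y}^{-\Re\langle z,w\rangle}=\Q(\CC_{\{t-\Re\langle z,w\rangle\ge 0\}})$ is by definition supported in $X\times Y\times\RR$, so after tensoring with it everything is already zero on the boundary divisors before you apply $\Q$; no quotient-category argument is needed. Second, for the properness step, note that $\var q\times\id_\RR$ has compact fibres $\var X^{\an}$, hence is proper even though $\RR$ is non-compact, so $\rmR(\var q\times\id_\RR)_\ast=\rmR(\var q\times\id_\RR)_!$ holds as you use it. With these two points made explicit, your argument is complete.
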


For $\F\in\BDC(\CC_X)$ let us set 
\[ L( \F ) = {\rmR}g_!(\CC_{\{t-{\Re}\langle z, w\rangle\geq0\}}
\otimes f^{-1}\F)[N] \in \BDC(\CC_{Y \times \RR}). 
\]

\begin{lemma}\label{prop1}
Let $\F\in\BDC(\CC_X)$.
Then for any $s\in\RR_+$
we have an isomorphism
\[\ell_s^{-1}\big( L( \F )
\big) \simeq  L( \F )\]
in $\BDC(\CC_{Y\times\RR})$. 
In other words, $L( \F )$ is a 
$\RR_+$-conic sheaf on 
$Y \times \RR \simeq \RR^{2N+1}$. 
\end{lemma}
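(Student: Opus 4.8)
The plan is to lift the diagonal $\RR_+$-action on $Y\times\RR$ to the total space $X\times Y\times\RR$ in such a way that the projection $g$ becomes equivariant while the defining datum of $L(\F)$ — the sheaf $\CC_{\{t-{\Re}\langle z, w\rangle\geq0\}}\otimes f^{-1}\F$ — is left invariant. The isomorphism will then follow by combining the proper base change theorem with this invariance.

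Concretely, for $s\in\RR_+$ I would introduce the isomorphism
\[\tl{\ell}_s : X\times Y\times\RR\simto X\times Y\times\RR,
\qquad (z, w, t)\mapsto(z, sw, st),\]
which fixes the $X$-factor and acts by the prescribed action on the $Y\times\RR$-factor. Two compatibilities are then immediate. First, since $\tl{\ell}_s$ does not move the $z$-coordinate we have $f\circ\tl{\ell}_s = f$, hence $\tl{\ell}_s^{-1}f^{-1}\F\simeq f^{-1}\F$. Second, $g\circ\tl{\ell}_s = \ell_s\circ g$, so the square with horizontal arrows $\tl{\ell}_s, \ell_s$ and both vertical arrows equal to $g$ commutes; because $\tl{\ell}_s$ and $\ell_s$ are isomorphisms this square is automatically Cartesian, and proper base change yields $\ell_s^{-1}\rmR g_!\simeq\rmR g_!\,\tl{\ell}_s^{-1}$.

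Next I would verify that the closed set $S:=\{(z, w, t)\mid t-{\Re}\langle z, w\rangle\geq0\}$ is invariant under $\tl{\ell}_s$. Using the bilinearity $\langle z, sw\rangle = s\langle z, w\rangle$, one computes
\[\tl{\ell}_s^{-1}S = \{(z, w, t)\mid st-{\Re}\langle z, sw\rangle\geq0\}
= \{(z, w, t)\mid s(t-{\Re}\langle z, w\rangle)\geq0\} = S,\]
where the final equality uses $s>0$. Hence $\tl{\ell}_s^{-1}\CC_S\simeq\CC_S$, and combining this with $\tl{\ell}_s^{-1}f^{-1}\F\simeq f^{-1}\F$ and the compatibility of inverse image with tensor products gives $\tl{\ell}_s^{-1}(\CC_S\otimes f^{-1}\F)\simeq\CC_S\otimes f^{-1}\F$.

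Putting these together,
\[\ell_s^{-1}L(\F) = \ell_s^{-1}\rmR g_!(\CC_S\otimes f^{-1}\F)[N]
\simeq\rmR g_!\,\tl{\ell}_s^{-1}(\CC_S\otimes f^{-1}\F)[N]
\simeq\rmR g_!(\CC_S\otimes f^{-1}\F)[N] = L(\F),\]
which is exactly the asserted $\RR_+$-conicity. I do not expect a serious obstacle here; the only points requiring care are fixing the orientation of the base-change isomorphism, so that $\ell_s^{-1}$ downstairs corresponds to $\tl{\ell}_s^{-1}$ upstairs, and noting that it is precisely the strict inequality $s>0$ that makes $S$ invariant — the same set $S$ is not preserved for $s<0$, which is why the conicity holds only for the group $\RR_+$ and not for all of $\RR^\times$.
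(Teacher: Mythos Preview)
Your proof is correct and follows essentially the same approach as the paper: the map you call $\tl{\ell}_s$ is precisely the paper's ${\rm id}_X\times\ell_s$, and both arguments use the same Cartesian square, the same base-change isomorphism $\ell_s^{-1}\rmR g_!\simeq\rmR g_!\,({\rm id}_X\times\ell_s)^{-1}$, the same invariance of the half-space $\{t-{\Re}\langle z,w\rangle\geq0\}$ under the scaling, and the same identity $f\circ({\rm id}_X\times\ell_s)=f$.
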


\begin{proof}
Consider the Cartesian diagram
\[\xymatrix@M=7pt@C=70pt{
X\times Y\times \RR \ar@{->}[r]^-{{\rm id}_X\times \ell_s}
\ar@{->}[d]_-{g} & 
X\times Y\times\RR \ar@{->}[d]^{g}\\
Y\times\RR \ar@{->}[r]_-{\ell_s} & 
Y\times\RR .
}\]
Then we have isomorphisms
\begin{align*}
\ell_s^{-1}\big( L( \F ) \big)
&\simeq
\ell_s^{-1}{\rmR}g_!(\CC_{\{t-{\Re}\langle z, 
w\rangle\geq0\}}\otimes f^{-1}\F)[N]\\
&\simeq
{\rmR}g_!({\rm id}_X\times \ell_s)^{-1}
(\CC_{\{t-{\Re}\langle z, w\rangle\geq0\}}
\otimes f^{-1}\F)[N]\\
&\simeq
{\rmR}g_!(\CC_{\{t-{\Re}\langle z, w\rangle\geq0\}}
\otimes f^{-1}\F)[N]\\
&\simeq
L( \F )
\end{align*}
where in the third isomorphism we used
\[st-{\Re}\langle z, sw\rangle\geq0
\quad \Longleftrightarrow \quad 
t-{\Re}\langle z, w\rangle\geq0\]
and $f \circ({\rm id}_X\times \ell_s)=f$.
\end{proof}

From now we shall consider the special case where 
$\F=Sol_X(\M)\in\BDC(\CC_X)$ for $\M\in\Modrh(\D_X)$.
\begin{corollary}\label{cor1}
Let $\M\in\Modrh(\D_X)$ be an algebraic regular holonomic $\D_X$-module on $X$.
Then there exists a $\RR_+$-conic sheaf 
$\SG \in \BDC(\CC_{Y \times \RR})$ on 
$Y \times \RR \simeq \RR^{2N+1}$ such that
\[ Sol_Y^\rmE ( \SM^\wedge ) \simeq 
\CC_Y^\rmE \Potimes \Q(\SG). \]
\end{corollary}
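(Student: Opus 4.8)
The plan is to reproduce the chain of isomorphisms from the proof of Theorem~\ref{blythm}, but \emph{without} assuming the monodromicity (equivalently the $\RR_+$-conicness) of $Sol_X(\SM)$, and then to restrict the resulting object from the compactification $\var{Y}^{\an}$ back to $Y$. First I would combine Theorem~\ref{thm-1} with the regular holonomic description $Sol_{\var X}^\rmE(\tl{\SM})\simeq\CC^\rmE_{\var{X}^{\an}}\Potimes\e(i_{X!}Sol_X(\SM))$ recalled just before Theorem~\ref{blythm}, together with the commutation ${}^\rmL(\CC^\rmE_{\var{X}^{\an}}\Potimes(\cdot))\simeq\CC^\rmE_{\var{Y}^{\an}}\Potimes{}^\rmL(\cdot)$, so as to obtain
\[Sol_{\var Y}^\rmE(\tl{\SM^\wedge})\simeq\CC^\rmE_{\var{Y}^{\an}}\Potimes{}^\rmL\big(\e(i_{X!}Sol_X(\SM))\big).\]
At this point, rather than invoking the Fourier--Sato formula for $\RR_+$-conic sheaves (which is exactly the step that needed $Sol_X(\SM)$ monodromic in Theorem~\ref{blythm}), I would feed $\F:=Sol_X(\SM)$ into Lemma~\ref{lem3}, which holds for an \emph{arbitrary} $\F\in\BDC(\CC_X)$. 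This gives ${}^\rmL(\e(i_{X!}\F))\simeq\Q\big(\tl{i_Y}_!L(\F)\big)$ with $L(\F)\in\BDC(\CC_{Y\times\RR})$ as defined after Lemma~\ref{lem3}.

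Next I would restrict along the open embedding $i_Y:Y\hookrightarrow\var Y$. Since $\tl{\SM^\wedge}=i_{Y\ast}\SM^\wedge$ and the enhanced solution functor is compatible with inverse images (cf.\ \cite{DK16}), applying $\bfE i_Y^{-1}$ yields $\bfE i_Y^{-1}Sol_{\var Y}^\rmE(\tl{\SM^\wedge})\simeq Sol_Y^\rmE(\SM^\wedge)$. On the right-hand side $\bfE i_Y^{-1}$ commutes with $\Potimes$, sends $\CC^\rmE_{\var{Y}^{\an}}$ to $\CC^\rmE_Y$, and undoes the extension $\tl{i_Y}_!$ because $i_Y$ is open, converting $\Q\big(\tl{i_Y}_!L(\F)\big)$ into $\Q(L(\F))$. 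Hence
\[Sol_Y^\rmE(\SM^\wedge)\simeq\CC^\rmE_Y\Potimes\Q\big(L(Sol_X(\SM))\big).\]
Setting $\SG:=L(Sol_X(\SM))$ then gives the desired isomorphism, and Lemma~\ref{prop1} applied to $\F=Sol_X(\SM)$ shows that $\SG$ is $\RR_+$-conic, which is precisely the remaining assertion.

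Once Theorem~\ref{thm-1}, Lemma~\ref{lem3} and Lemma~\ref{prop1} are granted, the first two paragraphs are essentially bookkeeping; the one place that requires genuine care is the restriction step, namely verifying the compatibility of $Sol^\rmE$ with $\bfE i_Y^{-1}$ and checking at the level of bordered spaces that $\bfE i_Y^{-1}$ really transforms $\Q(\tl{i_Y}_!L(\F))$ into $\Q(L(\F))$ (one must keep track of the bordered structure on $Y\times\RR_\infty$ against that on $\var{Y}^{\an}\times\RR_\infty$ and use that extension by $\tl{i_Y}_!$ followed by open restriction is the identity). I expect this to be the main, though still routine, obstacle. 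The conceptual content, that the non-conic sheaf $Sol_X(\SM)$ nonetheless produces an $\RR_+$-conic kernel $L(Sol_X(\SM))$, is furnished entirely by Lemma~\ref{prop1}, so no new idea beyond the stated lemmas should be needed.
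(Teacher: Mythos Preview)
Your proposal is correct and follows essentially the same route as the paper: the paper's proof also obtains $Sol_{\var Y}^\rmE(\tl{\SM^\wedge})\simeq\CC_{\var Y}^\rmE\Potimes\Q(\tl{i_Y}_!L(Sol_X(\SM)))$ via Lemma~\ref{lem3}, then restricts to $Y\subset\var Y$ and invokes Lemma~\ref{prop1} for the $\RR_+$-conicness. The only difference is that the paper treats the restriction step in a single clause, whereas you (appropriately) spell out the compatibilities with $\bfE i_Y^{-1}$ that make it work.
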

\begin{proof}
By Lemma \ref{lem3} we have isomorphisms
\begin{align*}
Sol_{\var Y}^{\rmE}(\tl{\M^\wedge})
&\simeq
\CC_{\var Y}^{\rmE}\Potimes{}^{\rmL}\varepsilon ( i_{X!} Sol_X(\M))\\
&\simeq
\CC_{\var Y}^{\rmE}\Potimes \Q(\tl{i_{Y}}_!{L}(Sol_X(\M))).
\end{align*}
Then by the restriction to $Y\subset \var Y$ and 
Lemma \ref{prop1} we obtain the assertion.
\end{proof}
\begin{proposition}\label{prop3}
Let $\M\in\Modrh(\D_X)$ be an algebraic regular holonomic $\D_X$-module on $X$.
Then for any $s\in\RR_+$ we have an isomorphism
\[\ell_s^{-1}{\rmL}^{\rmE}Sol_{Y}^{\rmE}({\M^\wedge})\simeq
{\rmL}^{\rmE}Sol_{Y}^{\rmE}({\M^\wedge}).\]
\end{proposition}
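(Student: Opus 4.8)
The plan is to combine the explicit description of $Sol_Y^\rmE(\M^\wedge)$ from Corollary \ref{cor1} with the $\RR_+$-conicity of the sheaf $\SG$ appearing there, and to transport $\ell_s^{-1}$ through the formula defining the left adjoint $\rmL^\rmE = \bfL^\rmE$ of $\Q$ (so that $\rmL^\rmE Sol_Y^\rmE(\M^\wedge)$ lives in $\BDC(\I\CC_{Y \times \RR_\infty})$, where $\ell_s^{-1}$ genuinely acts). By Corollary \ref{cor1} there is an $\RR_+$-conic sheaf $\SG \in \BDC(\CC_{Y \times \RR})$ with $Sol_Y^\rmE(\M^\wedge) \simeq \CC_Y^\rmE \Potimes \Q(\SG)$. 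Writing $\CC_Y^\rmE = \Q(K_0)$ with $K_0 := ``\underset{a \to +\infty}{\varinjlim}"\ \CC_{\{t \geq a\}}$ and using that $\Q$ commutes with $\Potimes$, I get $Sol_Y^\rmE(\M^\wedge) \simeq \Q(K_0 \Potimes \SG)$; applying the defining formula $\bfL^\rmE(\Q K) = (\CC_{\{t \geq 0\}} \oplus \CC_{\{t \leq 0\}}) \Potimes K$ to the representative $K = K_0 \Potimes \SG$ then yields
\[ \rmL^\rmE Sol_Y^\rmE(\M^\wedge) \simeq (\CC_{\{t \geq 0\}} \oplus \CC_{\{t \leq 0\}}) \Potimes K_0 \Potimes \SG. \]

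Next I would show that $\ell_s^{-1}$ commutes with the convolution $\Potimes$. Lifting $\ell_s$ to the isomorphism $\ell_s^{(2)} : Y \times \RR_\infty^2 \to Y \times \RR_\infty^2$, $(w, t_1, t_2) \mapsto (sw, st_1, st_2)$, one has $\mu \circ \ell_s^{(2)} = \ell_s \circ \mu$ and $p_i \circ \ell_s^{(2)} = \ell_s \circ p_i$. Since all maps in sight are isomorphisms, the base-change isomorphism $\ell_s^{-1}\rmR\mu_{!!} \simeq \rmR\mu_{!!}(\ell_s^{(2)})^{-1}$ together with the compatibility of $(\cdot)^{-1}$ with $\otimes$ gives $\ell_s^{-1}(F_1 \Potimes F_2) \simeq \ell_s^{-1}F_1 \Potimes \ell_s^{-1}F_2$, and hence
\[ \ell_s^{-1}\rmL^\rmE Sol_Y^\rmE(\M^\wedge) \simeq \ell_s^{-1}(\CC_{\{t \geq 0\}} \oplus \CC_{\{t \leq 0\}}) \Potimes \ell_s^{-1}K_0 \Potimes \ell_s^{-1}\SG. \]

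Finally I would check that each factor is $\ell_s^{-1}$-invariant. Since $s > 0$, we have $\ell_s^{-1}\CC_{\{t \geq 0\}} = \CC_{\{st \geq 0\}} = \CC_{\{t \geq 0\}}$ and likewise for $\{t \leq 0\}$; because $\ell_s^{-1}$ commutes with $``\varinjlim"$ and $\ell_s^{-1}\CC_{\{t \geq a\}} = \CC_{\{t \geq a/s\}}$, reindexing the inductive limit gives $\ell_s^{-1}K_0 \simeq K_0$; and $\ell_s^{-1}\SG \simeq \SG$ is exactly the $\RR_+$-conicity of $\SG$ furnished by Corollary \ref{cor1} (established in Lemma \ref{prop1}), the embedding $\BDC(\CC_{Y \times \RR}) \hookrightarrow \BDC(\I\CC_{Y \times \RR_\infty})$ being compatible with $\ell_s^{-1}$. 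Combining the three invariances yields $\ell_s^{-1}\rmL^\rmE Sol_Y^\rmE(\M^\wedge) \simeq \rmL^\rmE Sol_Y^\rmE(\M^\wedge)$, as desired. The argument is essentially formal once Corollary \ref{cor1} and Lemma \ref{prop1} are available; the one step deserving care is the commutation of $\ell_s^{-1}$ with $\Potimes$ in the bordered-space framework, for which I would invoke the base-change formalism of D'Agnolo-Kashiwara for morphisms of bordered spaces.
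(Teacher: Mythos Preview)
Your proof is correct and follows essentially the same route as the paper's. The only cosmetic differences are that the paper works first on $\var{Y}$ and then restricts to $Y$, and that it simplifies $(\CC_{\{t\geq0\}}\oplus\CC_{\{t\leq0\}})\Potimes K_0$ to the single ind-sheaf $\CC_{\{t\gg0\}}$ before applying $\ell_s^{-1}$, whereas you keep all three factors and verify $\ell_s^{-1}$-invariance of each; both arguments rest on Lemma~\ref{prop1} and the commutation of $\ell_s^{-1}$ with $\Potimes$.
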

\begin{proof}
There exist isomorphisms
\begin{align*}
{\rmL}^{\rmE}Sol_{\var Y}^{\rmE}(\tl{\M^\wedge})
&\simeq
{\rmL}^{\rmE}\big(\CC_{\var Y}^{\rmE}\Potimes \Q(\tl{i_{Y}}_!{L}(Sol_X(\M)))\big)\\
&\simeq
\CC_{\{t\gg0\}}\Potimes \tl{i_{Y}}_!{L}(Sol_X(\M)).
\end{align*}
We extend $\ell_s$ to $\var Y\times\RR$ naturally and denote it 
by the same symbol.
Then by Lemma \ref{prop1} for $s\in\RR_+$ we have isomorphisms
\begin{align*}
\ell_s^{-1}{\rmL}^{\rmE}Sol_{\var Y}^{\rmE}(\tl{\M^\wedge})
&\simeq
\ell_s^{-1}(\CC_{\{t\gg0\}}\Potimes \tl{i_{Y}}_!{L}(Sol_X(\M)))\\
&\simeq
\CC_{\{t\gg0\}}\Potimes\ell_s^{-1}( \tl{i_{Y}}_!{L}(Sol_X(\M)) )\\
&\simeq
\CC_{\{t\gg0\}}\Potimes \tl{i_{Y}}_!{L}(Sol_X(\M))\\
&\simeq
{\rmL}^{\rmE}Sol_{\var Y}^{\rmE}(\tl{\M^\wedge}).
\end{align*}
We obtain the assertion by the restriction to $Y\subset \var Y$.
\end{proof}

\begin{proposition}\label{prop2}
Let $\SM$ be an algebraic regular holonomic 
$\SD$-module on $X= \CC^N$. 
Then for any $s\in\RR_+$ we have an isomorphism
\[m_s^{-1}Sol_Y(\M^\wedge)\simeq Sol_Y(\M^\wedge).\]
\end{proposition}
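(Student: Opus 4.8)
The plan is to descend the enhanced $\RR_+$-conicity already available from Corollary \ref{cor1} to the classical solution complex, by transporting it through the functors that recover $Sol_Y(\M^\wedge)$ from $Sol_Y^\rmE(\M^\wedge)$. The geometric input is the commuting square $\ell_s\circ i_0=i_0\circ m_s$, where $i_0:Y\to Y\times\RR_\infty$ is induced by $w\mapsto(w,0)$; indeed $\ell_s(w,0)=(sw,0)=i_0(m_s(w))$. Both $m_s$ and $\ell_s$ are isomorphisms of (bordered) spaces, so for each of them inverse image, proper inverse image and exceptional inverse image coincide (in particular $m_s^{-1}\simeq m_s^!$ and $\ell_s^{-1}\simeq\ell_s^!$) and all the six operations commute with them via the evident base-change isomorphisms. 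Since moreover $\pi\circ\ell_s=m_s\circ\pi$, the functor $\ell_s^{-1}$ sends $\pi^{-1}\BDC(\I\CC_Y)$ into itself and hence induces an endofunctor of $\BEC(\I\CC_Y)$, still denoted $\ell_s^{-1}$.

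First I would record the recovery isomorphism. Combining $Sol_Y^\rmt(\M^\wedge)\simeq i_0^!\bfR^\rmE Sol_Y^\rmE(\M^\wedge)$ and $Sol_Y(\M^\wedge)\simeq\alpha_Y Sol_Y^\rmt(\M^\wedge)$ from Subsection \ref{sec:7} gives
\[Sol_Y(\M^\wedge)\simeq\alpha_Y\,i_0^!\,\bfR^\rmE Sol_Y^\rmE(\M^\wedge),\]
so it suffices to show that the right-hand side is fixed by $m_s^{-1}$ for every $s\in\RR_+$.

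Next I would prove the enhanced conicity $\ell_s^{-1}Sol_Y^\rmE(\M^\wedge)\simeq Sol_Y^\rmE(\M^\wedge)$. By Corollary \ref{cor1} we have $Sol_Y^\rmE(\M^\wedge)\simeq\CC_Y^\rmE\Potimes\Q(\SG)$ with $\SG=L(Sol_X(\M))$, and $\ell_s^{-1}\SG\simeq\SG$ by Lemma \ref{prop1}. The map $\ell_s$ lifts to the diagonal scaling $(w,t_1,t_2)\mapsto(sw,st_1,st_2)$ on $Y\times\RR_\infty^2$, which commutes with the maps $p_1,p_2,\mu$ defining $\Potimes$; hence $\ell_s^{-1}$ commutes with $\Potimes$. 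It also preserves each region $\{t\geq0\}$, $\{t\leq0\}$ and $\{t\geq a\}$ (as $st\geq a\Leftrightarrow t\geq a/s$, with $a/s\to+\infty$ together with $a$), so $\ell_s^{-1}\CC_Y^\rmE\simeq\CC_Y^\rmE$ and $\ell_s^{-1}\Q(\SG)\simeq\Q(\ell_s^{-1}\SG)\simeq\Q(\SG)$. Combining these yields the claim. (Alternatively this follows from Proposition \ref{prop3} via the full faithfulness of $\bfL^\rmE$ and the commutation $\bfL^\rmE\circ\ell_s^{-1}\simeq\ell_s^{-1}\circ\bfL^\rmE$.)

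Finally I would transport the conicity through $\bfR^\rmE$, $i_0^!$ and $\alpha_Y$. Since $\bfR^\rmE=\Prihom(\CC_{\{t\geq0\}}\oplus\CC_{\{t\leq0\}},-)$ and $\ell_s^{-1}$ fixes $\CC_{\{t\geq0\}}\oplus\CC_{\{t\leq0\}}$ and commutes with $\Prihom$, we get $\ell_s^{-1}\bfR^\rmE Sol_Y^\rmE(\M^\wedge)\simeq\bfR^\rmE Sol_Y^\rmE(\M^\wedge)$. Applying $i_0^!$ and using the square in the form $i_0^!\circ\ell_s^!\simeq m_s^!\circ i_0^!$ together with $\ell_s^{-1}\simeq\ell_s^!$ and $m_s^{-1}\simeq m_s^!$, we obtain $m_s^{-1}Sol_Y^\rmt(\M^\wedge)\simeq Sol_Y^\rmt(\M^\wedge)$; applying $\alpha_Y$, which commutes with $m_s^{-1}$, then gives $m_s^{-1}Sol_Y(\M^\wedge)\simeq Sol_Y(\M^\wedge)$. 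I expect the main obstacle to be the careful verification that the scaling functors commute with $\bfR^\rmE$, $i_0^!$ and $\alpha_Y$: because $\bfR^\rmE$ mixes the direct image $\rmR p_{1\ast}$ with the exceptional inverse image $\mu^!$ in the definition of $\Prihom$, these commutations rest on the base-change isomorphisms available precisely because $\ell_s$ and $m_s$ are isomorphisms, where the identifications $\ell_s^{-1}\simeq\ell_s^!$ and $m_s^{-1}\simeq m_s^!$ play a key role.
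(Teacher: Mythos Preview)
Your proof is correct and follows the same strategy as the paper's: establish $\RR_+$-conicity at the enhanced level from Lemma~\ref{prop1}/Corollary~\ref{cor1} and then transport it down through the functors recovering $Sol_Y$ from $Sol_Y^\rmE$. The only difference is in the choice of recovery formula: you use $Sol_Y(\M^\wedge)\simeq\alpha_Y\,i_0^!\,\bfR^\rmE Sol_Y^\rmE(\M^\wedge)$ together with the square $\ell_s\circ i_0=i_0\circ m_s$, whereas the paper uses the formula $Sol_Y(\M^\wedge)\simeq\alpha_Y\,\rmR\pi_\ast\,\rihom(\CC_{\{t\geq0\}}\oplus\CC_{\{t\leq0\}},\rmL^\rmE Sol_Y^\rmE(\M^\wedge))$ from \cite[Lemma~3.13]{IT18} together with the Cartesian square $\pi\circ\ell_s=m_s\circ\pi$ and Proposition~\ref{prop3}; these are parallel implementations of the same idea.
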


\begin{proof}
By (the proof) of 
Ito-Takeuchi \cite[Lemma 3.13]{IT18} 
there exist isomorphisms
\begin{align*}
Sol_Y(\M^\wedge)&\simeq
\alpha_Yi_0^!{\rmR}^{\rmE}(Sol_{Y}^{\rmE}({\M^\wedge}))\\
&\simeq
\alpha_Y{\rmR}\pi_{\ast}\rihom(\CC_{\{t\geq0\}}
\oplus\CC_{\{t\leq0\}},
{\rmL}^{\rmE}Sol_{Y}^{\rmE}({\M^\wedge})).
\end{align*}
Consider the commutative diagram
\[\xymatrix@M=7pt@C=70pt{
Y\times \RR_\infty\ar@{->}[r]^-{\pi}
\ar@{->}[d]_-{\ell_s} & 
Y\ar@{->}[d]^{m_s}\\
Y\times\RR_\infty\ar@{->}[r]_-{\pi} & 
Y.}\]
It is easy to see that it is Cartesian.
Then by Proposition \ref{prop3} we have isomorphisms
\begin{align*}
m_s^{-1}Sol_Y(\M^\wedge)&\simeq
m_s^{-1}\alpha_Y{\rmR}\pi_{\ast}
\rihom(\CC_{\{t\geq0\}}\oplus\CC_{\{t\leq0\}},
{\rmL}^{\rmE}Sol_{ Y}^{\rmE}({\M^\wedge}))\\
&\simeq
\alpha_Ym_s^!{\rmR}\pi_{\ast}
\rihom(\CC_{\{t\geq0\}}\oplus\CC_{\{t\leq0\}},
{\rmL}^{\rmE}Sol_{ Y}^{\rmE}({\M^\wedge}))\\
&\simeq
\alpha_Y{\rmR}\pi_{\ast}\ell_s^!
\rihom(\CC_{\{t\geq0\}}\oplus\CC_{\{t\leq0\}},
{\rmL}^{\rmE}Sol_{ Y}^{\rmE}({\M^\wedge}))\\
&\simeq
\alpha_Y{\rmR}\pi_{\ast}
\rihom(\ell_s^{-1}(\CC_{\{t\geq0\}}\oplus\CC_{\{t\leq0\}}),
\ell_s^!{\rmL}^{\rmE}Sol_{ Y}^{\rmE}({\M^\wedge}))\\
&\simeq
\alpha_Y{\rmR}\pi_{\ast}
\rihom(\CC_{\{t\geq0\}}\oplus\CC_{\{t\leq0\}},
{\rmL}^{\rmE}Sol_{ Y}^{\rmE}({\M^\wedge}))\\
&\simeq
Sol_Y(\M^\wedge), 
\end{align*}
where in the fifth isomorphism we used 
$\ell_s^! \simeq \ell_s^{-1}$ 
(see \cite[Corollary 3.3.11]{DK16}). 
\end{proof}

\begin{theorem}\label{thm2}
Let $\SM$ be an algebraic regular holonomic 
$\SD$-module on $X= \CC^N$. 
Then $Sol_Y(\M^\wedge)$ is monodromic.
\end{theorem}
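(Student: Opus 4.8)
The plan is to verify that $Sol_Y(\M^\wedge)$ satisfies the two hypotheses of Lemma \ref{lem1}—that it is $\CC$-constructible and that it is $\RR_+$-conic—and then to read off monodromicity from that lemma. This cleanly separates the soft constructibility input from the harder conicity input, the latter already having been prepared in the propositions above.

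First I would record the $\CC$-constructibility. Since the Fourier transform restricts to an equivalence $\Modhol(\SD_X)\simto\Modhol(\SD_Y)$ (recalled in Section \ref{sec:1}), the module $\M^\wedge$ is again holonomic, and therefore its classical solution complex $Sol_Y(\M^\wedge)$ lies in $\BDC_{\CC-c}(\CC_Y)$ by Kashiwara's constructibility theorem that solution complexes of holonomic $\SD$-modules are $\CC$-constructible.

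Second, for the $\RR_+$-conicity I would simply invoke Proposition \ref{prop2}, which furnishes isomorphisms $m_s^{-1}Sol_Y(\M^\wedge)\simeq Sol_Y(\M^\wedge)$ for every $s\in\RR_+$; this says precisely that $Sol_Y(\M^\wedge)$ is invariant under the multiplication action of $\RR_+$ on $Y$, i.e. that it is $\RR_+$-conic. With both properties in hand, Lemma \ref{lem1} applies and shows that the cohomology sheaves of $Sol_Y(\M^\wedge)$ are locally constant on each $\CC^\ast$-orbit, which is the assertion of Theorem \ref{thm2}.

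Thus the remaining argument is a short assembly, the substance having been placed upstream; if any step were delicate it would be Proposition \ref{prop2} rather than the final deduction. The crux there is to push the $\ell_s$-equivariance of the enhanced object ${\rmL}^{\rmE}Sol_{Y}^{\rmE}({\M^\wedge})$—established in Proposition \ref{prop3}, and itself resting on the $\RR_+$-conicity of $L(Sol_X(\M))$ from Lemma \ref{prop1}—down to the classical solution complex through the reconstruction $Sol_Y(\M^\wedge)\simeq\alpha_Y\rmR\pi_\ast\rihom(\CC_{\{t\geq0\}}\oplus\CC_{\{t\leq0\}}, {\rmL}^{\rmE}Sol_{Y}^{\rmE}({\M^\wedge}))$, exploiting the Cartesian square relating $m_s$ and $\ell_s$ over $\pi$ together with the identity $\ell_s^!\simeq\ell_s^{-1}$. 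Once that passage through the enhanced ind-sheaf formalism is granted, Theorem \ref{thm2} follows immediately from Lemma \ref{lem1}.
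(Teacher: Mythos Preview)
Your proposal is correct and follows exactly the paper's own proof: establish $\CC$-constructibility from the holonomicity of $\M^\wedge$, invoke Proposition~\ref{prop2} for the $\RR_+$-conicity, and conclude via Lemma~\ref{lem1}. The additional paragraph recapitulating the mechanism behind Proposition~\ref{prop2} is accurate but not needed for the proof of Theorem~\ref{thm2} itself.
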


\begin{proof}
Since the Fourier transform $\M^\wedge$ of 
$\M$ is also holonomic, 
$\Sol_Y(\M^\wedge)$ is $\CC$-constructible.
Moreover it is $\RR_+$-conic by Proposition \ref{prop2}.
Then the assertion follows from Lemma \ref{lem1}. 
\end{proof}

By this theorem we can improve Brylinski's 
Theorem \ref{blythm} as follows.
\begin{corollary}
Let $\SM$ be an algebraic regular holonomic 
$\SD$-module on $X= \CC^N$. 
Then $\M^\wedge$ is regular if and only if $\M$ is monodromic.
\end{corollary}

\begin{proof}
By Theorem \ref{blythm} the Fourier transform 
$\M^\wedge$ is regular if $\M$ is monodromic. 
It suffices to show the converse. 
Assume that $\M^\wedge$ is regular. Let 
\begin{equation*}
( \cdot )^{\vee} : \Modhol(\SD_Y)\simto \Modhol(\SD_X)
\end{equation*}
be the inverse Fourier transform. 
Then by Theorems \ref{blythm} and \ref{thm2} the 
original regular holonomic $\D_X$-module 
$\M\simeq (\M^\wedge)^{\vee}$
is monodromic. 
\end{proof}

\section{An Application to Direct Images of 
$\SD$-Modules}\label{sec:10}

In this section, we apply our results to 
direct images of some 
irregular holonomic $\SD$-modules. 
We inherit the situation and the notations in 
Section \ref{sec:1}. 
For a point $a\in Y=\CC^N$ let $\tau_a : 
Y\simto Y, w\mapsto w+a$
be the translation by it.

\begin{lemma}\label{lem2}
For $\M\in\Modcoh(\D_X)$ and $a\in Y=\CC^N$
we have an isomorphism
\[\bfD\tau_a^\ast(\M^\wedge)
\simeq
(\M\Dotimes\sho_Xe^{-\langle z, a\rangle})^\wedge.\]
\end{lemma}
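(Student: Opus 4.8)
The plan is to bypass the algebraic definition of $(\cdot)^\wedge$ through the Weyl algebra isomorphism and instead exploit the integral-kernel (Katz--Laumon) description recalled at the beginning of Section~\ref{sec:9}, namely
\[
\M^\wedge \simeq \bfD q_\ast\big(\bfD p^\ast\M \Dotimes \sho_{X\times Y}e^{-\langle z, w\rangle}\big).
\]
With this at hand the statement becomes a formal manipulation of the standard operations, driven by the single observation that the translation $\tau_a$ on $Y$ lifts to the isomorphism $\id_X\times\tau_a$ of $X\times Y$, which is compatible with both projections: $p\circ(\id_X\times\tau_a)=p$ and $q\circ(\id_X\times\tau_a)=\tau_a\circ q$.

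First I would record the Cartesian square
\[
\begin{CD}
X\times Y @>{\id_X\times\tau_a}>> X\times Y \\
@V{q}VV @VV{q}V \\
Y @>{\tau_a}>> Y
\end{CD}
\]
whose horizontal arrows are isomorphisms, and apply base change to rewrite $\bfD\tau_a^\ast(\M^\wedge)\simeq\bfD\tau_a^\ast\bfD q_\ast(\cdots)$ as $\bfD q_\ast\,(\id_X\times\tau_a)^\ast(\cdots)$. Next I would compute the pullback of the exponential kernel: since $(z,w)\mapsto(z,w+a)$ sends $-\langle z, w\rangle$ to $-\langle z, w\rangle-\langle z, a\rangle$, one obtains
\[
(\id_X\times\tau_a)^\ast\big(\sho_{X\times Y}e^{-\langle z, w\rangle}\big)\simeq \sho_{X\times Y}e^{-\langle z, w\rangle}\Dotimes \bfD p^\ast\big(\sho_X e^{-\langle z, a\rangle}\big),
\]
the second factor being the pullback from $X$ of the exponential connection because $-\langle z, a\rangle$ depends only on $z$. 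Combining this with $(\id_X\times\tau_a)^\ast\bfD p^\ast\M\simeq\bfD p^\ast\M$ and the compatibility $\bfD p^\ast\M\Dotimes\bfD p^\ast\N\simeq\bfD p^\ast(\M\Dotimes\N)$ of inverse image with tensor product, the whole pulled-back kernel reorganizes as $\bfD p^\ast(\M\Dotimes\sho_X e^{-\langle z, a\rangle})\Dotimes\sho_{X\times Y}e^{-\langle z, w\rangle}$. Applying $\bfD q_\ast$ and invoking Katz--Laumon once more identifies this with $(\M\Dotimes\sho_X e^{-\langle z, a\rangle})^\wedge$, which is the assertion.

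As an independent check, and as a fall-back if one prefers to stay purely algebraic, I would translate everything to the equivalence $\Modcoh(\D_X)\simeq\Mod_f(W_N)$: there $(\cdot)^\wedge$ merely renames the generators via $z_i\mapsto-\partial_{w_i}$, $\partial_{z_i}\mapsto w_i$; the twist $\M\Dotimes\sho_X e^{-\langle z, a\rangle}$ replaces the action of $\partial_{z_i}$ by $\partial_{z_i}-a_i$ (Leibniz rule, using $\partial_{z_i}e^{-\langle z, a\rangle}=-a_i e^{-\langle z, a\rangle}$); and $\bfD\tau_a^\ast$ becomes the automorphism $w_i\mapsto w_i-a_i$, $\partial_{w_i}\mapsto\partial_{w_i}$ of $W_N^\ast$. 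Both sides then act by $w_i\mapsto\partial_{z_i}-a_i$ and $\partial_{w_i}\mapsto -z_i$, so they visibly coincide. The main obstacle I anticipate is bookkeeping rather than conceptual: one must pin down the precise direction and sign of the automorphism induced by $\bfD\tau_a^\ast$ (it is $w\mapsto w-a$, not $w\mapsto w+a$) and confirm that it matches the sign in the exponential kernel, and in the geometric argument one must check that base change is legitimate here, which is guaranteed by the horizontal maps being isomorphisms so that no properness hypothesis is needed.
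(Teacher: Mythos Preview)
Your proof is correct and follows essentially the same route as the paper: both arguments invoke the Katz--Laumon integral-kernel description and rest on the single identity $e^{-\langle z,\,w+a\rangle}=e^{-\langle z,w\rangle}\cdot e^{-\langle z,a\rangle}$, the paper starting from $(\M\Dotimes\sho_X e^{-\langle z,a\rangle})^\wedge$ and you from $\bfD\tau_a^\ast(\M^\wedge)$. Your explicit base-change square and the independent Weyl-algebra verification are not in the paper's three-line proof, but they are extra detail rather than a different method.
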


\begin{proof}
By Katz-Laumon \cite{KL85}
there exist isomorphisms 
\begin{align*}
(\M\Dotimes\sho_Xe^{-\langle z, a\rangle})^\wedge
&\simeq
\bfD q_{\ast}\big(\bfD p^\ast
(\M\Dotimes \sho_Xe^{-\langle z, a\rangle})
\Dotimes\sho_{X\times Y}e^{-\langle z, w\rangle}\big)\\
&\simeq
\bfD q_{\ast}\big(\bfD p^\ast \M
\Dotimes\sho_{X \times Y} e^{-\langle z, w+a\rangle}\big)\\
&\simeq
\bfD\tau_a^\ast(\M^\wedge).
\end{align*}
\end{proof}

\begin{theorem}\label{thm3}
Let $\rho : X=\CC^N\twoheadrightarrow Z=\CC^n$
be a surjective linear map and 
$\SM$ an algebraic regular holonomic 
$\SD$-module on $X= \CC^N$. 
For the dual $L\simeq\CC^n$ of $Z$ let 
$\iota : L\xhookrightarrow{\ \ \ } Y=\CC^N$ 
be the injective linear map induced by $\rho$.
Assume that for a point $a\in Y\setminus\iota(L)$ 
the affine linear subspace
$K=\tau_a(\iota(L))\subset Y=\CC^N$ is non-characteristic for the
Fourier transform $\M^\wedge\in\Modhol(\D_Y)$ of $\M$.
Then the direct image 
$\bfD\rho_\ast(\M\Dotimes\sho_Xe^{- \langle z, a\rangle})
\in\BDChol(\D_Z)$ is concentrated in degree $0$.
\end{theorem}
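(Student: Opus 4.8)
The plan is to transport the problem to the dual (Fourier) side, where the direct image $\bfD\rho_\ast$ becomes a restriction to an affine subspace, and then to invoke the non-characteristic restriction theorem. Set $\SM' := \M\Dotimes\sho_X e^{-\langle z,a\rangle}\in\Modhol(\SD_X)$; this is holonomic (in general irregular), and since $\bfD\rho_\ast$ preserves holonomicity we have $\bfD\rho_\ast\SM'\in\BDChol(\SD_Z)$. The Fourier transform is an exact equivalence of abelian categories $\Modhol(\SD_Z)\simto\Modhol(\SD_L)$ with $L=Z^\ast$, hence induces a t-exact equivalence on bounded derived categories; therefore it suffices to prove that $(\bfD\rho_\ast\SM')^\wedge$ is concentrated in degree $0$. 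The underlying principle is that the Fourier transform exchanges the direct image along the surjection $\rho$ with the restriction along its dual embedding $\iota$.

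Next I would compute $(\bfD\rho_\ast\SM')^\wedge$ from the Katz--Laumon kernel formula on $Z$. Let $p_1 : X\times L\to X$ and $p_2 : X\times L\to L$ be the projections. Smooth base change along the submersion $\rho$, together with the projection formula and the defining identity $\langle\rho(z),\lambda\rangle=\langle z,\iota(\lambda)\rangle$ of the dual map, yields
\[
(\bfD\rho_\ast\SM')^\wedge \simeq \bfD p_{2\ast}\big(\bfD p_1^\ast\SM'\Dotimes\sho_{X\times L}e^{-\langle z,\iota(\lambda)\rangle}\big).
\]
Pulling the twist out of $\SM'$ and adding exponents turns the right-hand side into $\bfD p_{2\ast}(\bfD p_1^\ast\M\Dotimes\sho_{X\times L}e^{-\langle z,\,a+\iota(\lambda)\rangle})$. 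Writing $j:=\tau_a\circ\iota : L\to Y$, $\lambda\mapsto a+\iota(\lambda)$, whose image is exactly $K=\tau_a(\iota(L))$, this is the kernel-integral description of the inverse image $\bfD j^\ast(\M^\wedge)$; equivalently, Lemma \ref{lem2} gives $(\SM')^\wedge\simeq\bfD\tau_a^\ast(\M^\wedge)$, so that $(\bfD\rho_\ast\SM')^\wedge\simeq\bfD\iota^\ast(\SM')^\wedge\simeq\bfD j^\ast(\M^\wedge)$, the restriction of $\M^\wedge$ to the affine subspace $K$.

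The hypothesis now enters decisively: since $K$ is non-characteristic for the holonomic module $\M^\wedge$, the non-characteristic inverse image theorem (Kashiwara; see e.g. \cite{HTT08}) shows that $\bfD j^\ast(\M^\wedge)$ is a single holonomic $\SD_L$-module sitting in degree $0$. Consequently $(\bfD\rho_\ast\SM')^\wedge$ is concentrated in degree $0$, and by t-exactness of the Fourier equivalence so is $\bfD\rho_\ast\SM'=\bfD\rho_\ast(\M\Dotimes\sho_X e^{-\langle z,a\rangle})$, which is the desired conclusion.

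I expect the main obstacle to be the identification in the second paragraph, namely justifying the base change against the closed embedding $\iota$ (equivalently $j$) and tracking the cohomological shifts. Because the projection $q$ is not proper, base change with the $\ast$-inverse image is not automatic; it is precisely the non-characteristic condition on $K$ that makes $\id_X\times j$ non-characteristic for the kernel-twisted module relative to the projection and hence lets the restriction commute with the integral transform. One must likewise verify that the Katz--Laumon normalization---which is what renders both Fourier transforms t-exact---introduces no residual shift, so that ``concentrated in degree $0$'' is transported faithfully through the identification rather than merely up to a shift. This shift bookkeeping, and not the conceptual structure, is where care is required.
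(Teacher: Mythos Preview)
Your proof is correct and follows essentially the same route as the paper: identify $(\bfD\rho_\ast\SM')^\wedge$ with $\bfD i_K^\ast(\M^\wedge)$ via Lemma~\ref{lem2} and the Fourier exchange of $\bfD\rho_\ast$ with $\bfD\iota^\ast$, then apply the non-characteristic inverse image theorem and the exactness of the Fourier transform. One remark: the obstacle you anticipate in your last paragraph is not there---the identity $\bfD\iota^\ast(\SN^\wedge)\simeq(\bfD\rho_\ast\SN)^\wedge$ holds for \emph{any} holonomic $\SN$ as a purely algebraic fact about Weyl algebras (the paper simply cites \cite[Proposition~3.2.6]{HTT08}), so the non-characteristic hypothesis plays no role in that identification and is used only, as you say, to guarantee concentration in degree~$0$ of the restriction.
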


\begin{proof}
Let $i_L = \iota : L\xhookrightarrow{\ \ \ }Y=\CC^N$
and $i_K : K\xhookrightarrow{\ \ \ }Y=\CC^N$
be the inclusion maps.
Then via the identification $L\simeq K$ induced  
by the translation $\tau_a$ we have isomorphisms
 \begin{align*}
 \bfD i_K^\ast(\M^\wedge)
 &\simeq
 \bfD i_L^\ast\bfD\tau_a^{\ast}(\M^\wedge)\\
 &\simeq
 \bfD i_L^\ast(\M\Dotimes\sho_X e^{-\langle z, a\rangle})^\wedge\\
 &\simeq
 \big(\bfD\rho_\ast(\M\Dotimes\sho_Xe^{-\langle z, a\rangle})\big)^\wedge,
 \end{align*}
 where in the second (resp. third) isomorphism we used
 Lemma \ref{lem2} (resp. \cite[Proposition 3.2.6]{HTT08}).
 By our assumption, the left hand side 
 $\bfD i_K^\ast(\M^\wedge)\in\BDChol(\D_K)$ 
 is concentrated in degree $0$.
 Then the assertion follows from the fact that 
 the Fourier transform is an exact functor.
\end{proof}

\begin{corollary}
In the situation of Theorem \ref{thm3}
assume also that $n=N-1$ i.e. the surjective linear map 
$\rho :  X=\CC^N\twoheadrightarrow Z=\CC^n$
is of codimension one.
Then for any point $a\in Y\setminus\iota(L)$
the direct image 
$\bfD\rho_\ast(\M\Dotimes \sho_X e^{-\langle z, a\rangle})
\in\BDChol(\D_Z)$
is concentrated in degree $0$. 
\end{corollary}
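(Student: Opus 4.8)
The plan is to deduce the statement directly from Theorem \ref{thm3}: since everything else in its hypotheses is already given, it suffices to check that in the codimension-one case the affine hyperplane $K=\tau_a(\iota(L))$ is non-characteristic for $\M^\wedge$ for every $a\in Y\setminus\iota(L)$, i.e. that $T^{*}_{K}Y\cap\mathrm{Ch}(\M^\wedge)$ is contained in the zero section. The one genuinely new input is the monodromicity of $Sol_Y(\M^\wedge)$ from Theorem \ref{thm2}, which I would use to pin down the position of the characteristic variety $\mathrm{Ch}(\M^\wedge)\subset T^{*}Y$.

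First I would record that $\mathrm{Ch}(\M^\wedge)\subset\{(w,\xi):\langle w,\xi\rangle=0\}$, where $\xi\in Y^{*}\cong X$ and $\langle\, ,\rangle$ is the canonical pairing. By Theorem \ref{thm2} the complex $Sol_Y(\M^\wedge)$ is monodromic, in particular $\RR_+$-conic, so by the standard microlocal estimate for conic sheaves \cite{KS90} its microsupport lies in $\{{\Re}\langle w,\xi\rangle=0\}$. Kashiwara's theorem identifies this microsupport with $\mathrm{Ch}(\M^\wedge)$, and since $\mathrm{Ch}(\M^\wedge)$ is conic under the fiberwise $\CC^{*}$-scaling $\xi\mapsto t\xi$, replacing $\xi$ by $\xi$ and by $i\xi$ promotes the vanishing of the real part to the vanishing of the full holomorphic pairing, giving the claimed containment.

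Next comes the elementary linear-algebra step. Because $\rho$ is surjective of codimension one, $\Ker\rho\subset X$ is a line, say $\CC v_{0}$ with $v_{0}\neq0$, and $\iota(L)=(\Ker\rho)^{\perp}\subset Y$ is a hyperplane whose conormal direction, under $Y^{*}\cong X$, is exactly $\CC v_{0}=\Ker\rho$; the same holds for its translate $K$. Writing a point of $K$ as $w=a+u$ with $u\in\iota(L)$ and a conormal covector as $\xi=c\,v_{0}$, the relation $\langle u,v_{0}\rangle=0$ gives $\langle w,\xi\rangle=c\,\langle a,v_{0}\rangle$. Since $a\notin\iota(L)=(\Ker\rho)^{\perp}$ means precisely $\langle a,v_{0}\rangle\neq0$, the pairing $\langle w,\xi\rangle$ never vanishes for $c\neq0$. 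Together with the containment of the previous paragraph this forces $T^{*}_{K}Y\cap\mathrm{Ch}(\M^\wedge)$ into the zero section, so $K$ is non-characteristic for $\M^\wedge$, and Theorem \ref{thm3} yields the concentration of $\bfD\rho_{\ast}(\M\Dotimes\sho_Xe^{-\langle z,a\rangle})$ in degree $0$.

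The main obstacle is the structural step identifying monodromicity with the inclusion $\mathrm{Ch}(\M^\wedge)\subset\{\langle w,\xi\rangle=0\}$: one must carefully pass from the real Euler pairing that governs conic sheaves to the holomorphic symplectic pairing on $T^{*}Y$, invoking the fiber-conicity of the characteristic variety to upgrade the real vanishing to the complex one. Once this is settled, the remaining verification is purely the linear-algebra computation above, and the invocation of Theorem \ref{thm3} is immediate.
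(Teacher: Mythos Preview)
Your proof is correct and follows the same strategy as the paper: invoke Theorem \ref{thm2} to obtain monodromicity of $\M^\wedge$, deduce that the affine hyperplane $K=\tau_a(\iota(L))$ (which avoids the origin) is non-characteristic for $\M^\wedge$, and then apply Theorem \ref{thm3}. The only difference is that you spell out the non-characteristic step via the inclusion $\mathrm{Ch}(\M^\wedge)\subset\{\langle w,\xi\rangle=0\}$ and an explicit linear-algebra computation, whereas the paper simply asserts that monodromicity together with $0\notin K$ yields the non-characteristic condition.
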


\begin{proof}
By Theorem \ref{thm2}
the Fourier transform $\M^\wedge$ of $\M$ is monodromic.
Since the affine linear subspace $K=\tau_a(\iota(L))\subset Y=\CC^N$
does not contain the origin $0\in Y=\CC^N$,
this implies that $K$ is non-characteristic for $\M^\wedge$.
Then the assertion follows  from Theorem \ref{thm3}.
\end{proof}


\begin{thebibliography}{99}

\bibitem[BE04]{BE04}
Spencer Bloch and H{\'e}l{\`e}ne Esnault,
Local {F}ourier transforms and rigidity for {$D$}-modules,
Asian J. Math., 8(4):587--605, 2004.

\bibitem[Bry86]{Bry86}
Jean-Luc Brylinski,
Transformations canoniques, dualit{\'e} projective, th{\'e}orie de
  {L}efschetz, transformations de {F}ourier et sommes trigonom{\'e}triques,
 {Ast{\'e}risque}, (140-141):3--134, 251, 1986,
G{\'e}om{\'e}trie et analyse microlocales.

\bibitem[DHMS17]{DHMS17}
Andrea D'Agnolo, Marco Hien, Giovanni Morando, and Claude Sabbah,
Topological computation of some Stokes phenomena on the affine line,
to appear in Ann. Inst. Fourier.

\bibitem[DK16]{DK16}
Andrea D'Agnolo and Masaki Kashiwara,
Riemann-{H}ilbert correspondence for holonomic {D}-modules,
{Publ. Math. Inst. Hautes {\'E}tudes Sci.}, 123:69--197, 2016.

\bibitem[DK17]{DK17}
Andrea D'Agnolo and Masaki Kashiwara,
A microlocal approach to the enhanced Fourier-Sato transform in
dimension one, Adv. in Math, 339:1-59, 2019.

\bibitem[Dai00]{Dai00}
Liviu Daia,
La transformation de {F}ourier pour les  {D}-modules,
{Ann. Inst. Fourier}, 50(6):1891--1944 (2001), 2000.

\bibitem[Hei15]{Hei15}
Hedwig Heizinger, 
Stokes structure and direct image of irregular singular {D}-modules, 
arXiv:1506.05959v1, preprint.

\bibitem[HR08]{HR08}
Marco Hien and C{\'e}line Roucairol, 
Integral representations for solutions of exponential Gauss-Manin systems, 
{Bull. Soc. Math. France}, 136:505--532, 2008.

\bibitem[HTT08]{HTT08}
Ryoshi Hotta, Kiyoshi Takeuchi, and Toshiyuki Tanisaki, 
{{$D$}-modules, perverse sheaves, and representation theory}, 
volume 236 of {Progress in Mathematics},
Birkh{\"a}user Boston, 2008.

\bibitem[IT18]{IT18}
Yohei Ito and Kiyoshi Takeuchi,
On irregularities of Fourier transforms of 
regular holonomic {D}-Modules, 
arXiv:1801.07444v17, 
preprint.

\bibitem[KS90]{KS90}
Masaki Kashiwara and Pierre Schapira,
{Sheaves on manifolds}, volume 292 of {Grundlehren der
  Mathematischen Wissenschaften},
Springer-Verlag, Berlin, 1990.

\bibitem[KS01]{KS01}
Masaki Kashiwara and Pierre Schapira,
Ind-sheaves,
{Ast{\'e}risque}, (271):136, 2001.

\bibitem[KS06]{KS06}
Masaki Kashiwara and Pierre Schapira,
{Categories and Sheaves}, volume 332 of {Grundlehren der
  Mathematischen Wissenschaften},
Springer-Verlag Berlin Heidelberg, 2006.

\bibitem[KS16a]{KS16-2}
Masaki Kashiwara and Pierre Schapira,
 Irregular holonomic kernels and {L}aplace transform,
{Selecta Math.}, 22(1):55--109, 2016.

\bibitem[KS16b]{KS16}
Masaki Kashiwara and Pierre Schapira,
{Regular and irregular holonomic {D}-modules}, volume 433 of {
  London Mathematical Society Lecture Note Series},
   Cambridge University Press, Cambridge, 2016.

\bibitem[KL85]{KL85}
Nicholas~M. Katz and G{\'e}rard Laumon,
Transformation de {F}ourier et majoration de sommes exponentielles,
{Inst. Hautes {\'E}tudes Sci. Publ. Math.}, (62):361--418, 1985.

\bibitem[Mal88]{Mal88}
Bernard Malgrange,
Transformation de {F}ourier g{\'e}ometrique,
{Ast{\'e}risque}, (161-162):Exp.\ No.\ 692, 4, 133--150 (1989),
  1988,
  S{\'e}minaire Bourbaki, Vol. 1987/88.

\bibitem[Moc10]{Mochi10}
Takuro Mochizuki,
Note on the {S}tokes structure of {F}ourier transform,
{Acta Math. Vietnam.}, 35(1):107--158, 2010.

\bibitem[Moc18]{Mochi18}
Takuro Mochizuki,
Stokes shells and Fourier transforms,
arXiv:1808.01037v1, preprint.

\bibitem[Pre11]{Pre11}
Luca Prelli,
Conic sheaves on subanalytic sites and laplace transform,
{Rend. Sem. Mat. Univ. Padova}, 125:173--206, 2011.

\bibitem[Rou06]{Rou06}
C{\'e}line Roucairol, 
Irregularity of an analogue of the Gauss-Manin systems, 
{Bull. Soc. Math. France}, 134:269--286, 2006.

\bibitem[Rou07]{Rou07}
C{\'e}line Roucairol, 
Formal structure of direct image of holonomic {D}-modules of 
exponential type, 
{Manuscripta Math.}, 124:299--318, 2007.

\bibitem[Sab06]{Sab06}
Claude Sabbah, 
Hypergeometric periods for a tame polynomial, 
{Port. Math.}, 63:173--226, 2006.

\bibitem[Sab08]{Sab08}
Claude Sabbah,
An explicit stationary phase formula for the local formal
{F}ourier-{L}aplace transform,
in {Singularities {I}}, volume 474 of {Contemp. Math.}, pages
309--330. Amer. Math. Soc., Providence, RI, 2008.

\bibitem[Tam08]{Tama08}
Dmitry Tamarkin,
Microlocal condition for non-displaceability,
in Algebraic and Analytic Microlocal Analysis,
Springer Proc. in Math. \& Stat. 269:99-223, 2018.

\bibitem[Ver83]{Ver83}
J.-L. Verdier,
Sp{\'e}cialisation de faisceaux et monodromie mod{\'e}r{\'e}e,
 in {Analysis and topology on singular spaces, {II}, {III}
  ({L}uminy, 1981)}, volume 101 of {Ast{\'e}risque}, 
pages 332--364. Soc.
  Math. France, Paris, 1983.

\end{thebibliography}
\end{document}